\def\BibTeX{{\rm B\kern-.05em{\sc i\kern-.025em b}\kern-.08em
    T\kern-.1667em\lower.7ex\hbox{E}\kern-.125emX}}
\newtheorem{thm}{\bf Theorem}[section]
\newtheorem{cor}[thm]{\bf Corollary}
\newtheorem{lem}[thm]{\bf Lemma}
\newtheorem{rem}[thm]{\bf Remark}
\newtheorem{ass}[thm]{\bf Assumption}
\newtheorem{prop}[thm]{\bf Proposition}
\newtheorem{prob}[thm]{\bf Problem}
\newtheorem{deff}[thm]{\bf Definition}
\newtheorem{eg}[thm]{\bf Example}
\newtheorem{claim}[thm]{\bf Claim}
\newcommand{\eps}{\varepsilon}
\newcommand{\dt}{\delta t}
\newcommand{\ups}{\upsilon}
\newcommand{\hh}{\mathcal{H}}
\newcommand{\as}{\mathcal{A}}
\newcommand{\ls}{\mathcal{L}}
\newcommand{\vs}{\mathcal{V}}
\newcommand{\cs}{\mathcal{C}}
\newcommand{\os}{\mathcal{O}}
\newcommand{\hb}{\mathscr{H}}
\newcommand{\hs}{\mathcal{H}}
\newcommand{\R}{\mathbb{R}}
\newcommand{\B}{\mathbb{B}}
\newcommand{\N}{\mathbb{N}}
\newcommand{\ob}{\mathbf{\Omega}}
\newcommand{\qf}{\mathfrak{q}}
\newcommand{\ts}{\mathcal{T}}
\newcommand{\I}{\mathcal{I}}
\newcommand{\cf}{\mathfrak{c}}
\newcommand{\ef}{\mathfrak{er}}
\newcommand{\pf}{\mathfrak{p}}
\newcommand{\xb}{\mathbf{x}}
\newcommand{\yb}{\mathbf{y}}
\newcommand{\zb}{\mathbf{z}}
\newcommand{\tx}{\tilde{x}}
\newcommand{\ty}{\tilde{y}}
\newcommand{\tz}{\tilde{z}}
\newcommand{\tu}{\tilde{u}}
\newcommand{\tf}{\tilde{f}}
\newcommand{\tg}{\tilde{g}}
\newcommand{\tv}{\tilde{v}}
\newcommand{\tb}{\mathbb{T}}
\newcommand{\hx}{\hat{x}}
\newcommand{\om}{\Omega}
\newcommand{\w}{\varpi}
\newcommand{\dom}{\operatorname{dom}}
\newcommand{\lmax}{L_{0}}
\newcommand{\zbar}{\bar{z}}
\newcommand{\lya}{\lambda^\gamma}
\newcommand{\xx}{\mathcal{X}}
\newcommand{\uu}{\mathcal{U}}
\newcommand{\huu}{\hat{\mathcal{U}}}
\newcommand{\hrr}{\hat{\mathcal{R}}}
\newcommand{\hphi}{\hat{\phi}}
\newcommand{\hPhi}{\hat{\Phi}}
\newcommand{\hd}{\hat{d}}
\newcommand{\rr}{\mathcal{R}}
\newcommand{\uf}{\mathfrak{u}}
\newcommand{\vf}{\mathfrak{v}}
\newcommand{\psp}{\mathfrak{P}}
\newcommand{\ck}{\mathfrak{C}}
\newcommand{\dk}{\mathfrak{d}}
\newcommand{\hdk}{\hat{\mathfrak{d}}}
\newcommand{\pk}{\mathfrak{p}}
\newcommand{\tta}{\mathcal{P}}
\newcommand{\kk}{\mathfrak{K}}
\newcommand{\xk}{\mathfrak{x}}
\newcommand{\yk}{\mathfrak{y}}
\newcommand{\rbar}{\bar{r}}
\newcommand{\rup}{\check{r}}
\newcommand{\rhat}{\hat{r}}
\newcommand{\ub}{{\mathbf{u}}}
\newcommand{\eb}{{\mathbf{e}}}
\newcommand{\hxb}{\hat{\mathbf{x}}}
\newcommand{\hub}{\hat{\mathbf{u}}}
\newcommand{\ee}{\mathcal{E}}
\newcommand{\ex}{\tau_{\operatorname{ex}}}
\newcommand{\bp}{\bar{p}}
\newcommand{\bq}{\bar{q}}
\newcommand{\K}{\operatorname{K}}
\newcommand{\trans}{\mathcal{T}}
\newcommand{\transt}{\widetilde{\mathcal{T}}}
\newcommand{\rrs}{\mathscr{R}}
\newcommand{\rrk}{\mathfrak{R}}
\newcommand{\W}{\widetilde{W}}
\newcommand{\dd}{\mathcal{D}}
\newcommand{\ddc}{\mathcal{D}_{\operatorname{con}}}
\newcommand{\init}{\mathcal{X}_0}
\newcommand{\inte}{\operatorname{Int}}
\newcommand{\st}{{\tau_\dd}}
\newcommand{\aba}{\oball_R(A)}
\newcommand{\supx}{\sup\limits_{X\in\soln(x,W)}}
\newcommand{\infx}{\inf\limits_{X\in\soln(x,W)}}
\newcommand{\cb}{C_{\operatorname{BL}}}
\newcommand{\ra}{\rightarrow}
\newcommand{\set}[1]{\left\{#1\right\}}
\newcommand{\norm}[1]{\left| #1 \right|}
\newcommand{\abs}[1]{\left\vert #1 \right\vert}
\newcommand{\bl}[1]{\left\|#1\right\|_{\operatorname{BL}}}
\newcommand{\qv}[1]{\left\langle\langle#1\right\rangle\rangle}
\newcommand{\lip}[1]{\left\|#1\right\|_{\operatorname{Lip}}}
\newcommand{\lips}[1]{\left\|#1\right\|_{\operatorname{BL,\;s}}}
\newcommand{\Qs}{\mathcal{Q}}
\newcommand{\ball}{\overline{\mathcal{B}}}
\newcommand{\oball}{\mathcal{B}}
\newcommand{\sig}{\varsigma}
\newcommand{\tr}{\operatorname{tr}}
\newcommand{\Ss}{\mathcal{S}}
\newcommand{\LLk}{\mathfrak{L}}
\newcommand{\normr}[1]{\left\|#1\right\|_{\operatorname{R}}}
\newcommand{\Qed}{\hfill$\diamond$} 
\newcommand{\Qede}{\hfill\diamond} 
\newcommand{\ymmark}[1]{{\color{black} #1}}
\newcommand{\ymmarkr}[1]{{\color{black} #1}}
\newcommand{\BlackBox}{\rule{1.5ex}{1.5ex}}    
\newcommand{\pfbox}{\hfill\BlackBox\\[1mm]}  
\newenvironment{proof}{\par\noindent{\bf Proof:\ }}{\hfill\BlackBox\\[1mm]}
\begin{document}
\title{Online Learning and Control Synthesis 
for 
Reachable Paths of Unknown Nonlinear Systems} 
\author{Yiming Meng, Taha Shafa, Jesse Wei, Melkior Ornik \IEEEmembership{Senior Member, IEEE}
\thanks{This research was supported by NASA under grant numbers 80NSSC21K1030 and 80NSSC22M0070, as well as by the Air Force Office of Scientific Research under grant number FA9550-23-1-0131.}

\thanks{
Yiming Meng is with  the
Coordinated Science Laboratory, University of Illinois Urbana-Champaign,
Urbana, IL 61801, USA.
        {\tt\small ymmeng@illinois.edu}.
}

\thanks{Taha Shafa and Jesse Wei are with the Department of Aerospace Engineering , University of Illinois Urbana-Champaign,
Urbana, IL 61801, USA.
        {\tt\small tahaas2, jwei28@illinois.edu}.}

\thanks{
Melkior Ornik is with the Department of Aerospace Engineering and the
Coordinated Science Laboratory,
University of Illinois Urbana-Champaign,
Urbana, IL 61801, USA. {\tt\small  mornik@illinois.edu}.}
}

\maketitle

\begin{abstract}
In this paper, we present a novel method to drive a nonlinear system to a desired state, with limited a priori knowledge of its dynamic model: local dynamics at a single point and the bounds on the rate of change of these dynamics. 
This method synthesizes control actions by utilizing locally learned dynamics along a trajectory, based on data available up to that moment, and known proxy dynamics, which can generate an underapproximation of the unknown system's true reachable set. An important benefit to the contributions of this paper is the lack of knowledge needed to execute the presented control method. We establish sufficient conditions  to ensure that a controlled trajectory reaches a small neighborhood of any provably reachable state within a short time horizon, with precision dependent on the tunable parameters of these conditions. 
\end{abstract}

\begin{IEEEkeywords}
Control Synthesis; Model-Free Nonlinear Systems; Online Learning; Guaranteed Reachability.
\end{IEEEkeywords}

\section{Introduction}\label{sec:introduction}
Systems across domains operate with limited information, such as uncertainties arising from an insufficient understanding of system transitions and external forces. 

In this paper, we focus on the situation where the nonlinear system is partially unknown, with our knowledge limited to its local dynamics at a single point and the bounds on the rate of change of these dynamics. Based on this restricted information, we aim to implement the following pipeline for the system. 
 First, we identify a set of states, known as the Guaranteed Reachable Set (GRS), that the unknown system can provably reach within a given timeframe from the point of known information, using underapproximation proxy dynamics \cite{shafa2022maximal, Shafa23Reachability}. Then, we specify a  state on the boundary of the GRS and synthesize a controller, enabling the partially unknown  system to approach the vicinity of this state. Although the works in \cite{shafa2022maximal, Shafa23Reachability} provide a systematic method for estimating the GRS based on set-valued mapping analysis, they lack the capability to conversely identify a control signal that maneuvers the true system to reach a specified region within the GRS. Therefore, we concentrate on addressing the question of how to design control for the reachability task mentioned above.



Motivated by scenarios where an adverse event can cause significant changes to the system dynamics, there exists a substantial body of work in the realm of control for systems with  uncertainties in the dynamic model. 
Data-driven and neural network control methods \cite{cheng2021adaptive, bianchin2021data,taylor2021towards,truong2023novel} can be utilized for applications that include computing steady-state transfer functions in real-time 
or synthesizing controllers for nonlinear systems with dynamic modeling uncertainties. 
Other methods involve learning a Gaussian process model \cite{awan2023formal, kocijan2004gaussian} for black-box identification of nonlinear systems. Methods outside of identification \cite{chen2022intelligent,jin2023prediction,mauroy2019koopman, zeng2022sampling,meng2023learning, meng2024resolvent, zeng2024data} and machine learning methods \cite{shmalko2021control,meng2024physics} utilize control tools like control barrier functions \cite{ames2019control} and more classical adaptive control methods \cite{seto1994adaptive,jiang1998design,astolfi2008nonlinear},  
but are limited to handling less significant uncertainties.  While all of these methods have their merits, they all require the use of significantly more information, such as an existing model with uncertainties or system data. 

In contrast, the novel control method we present for online controller synthesis relies solely on discrete-time, up-to-date historical data from a single trajectory run and a derived underapproximation proxy control system based on presumed model information \cite{shafa2022maximal, Shafa23Reachability}.   It is worth noting that the work \cite{ornik2019control} is capable of learning dynamics and potentially achieving the specified control tasks, such as reachability and safety, using the same data. However, a `goodness function'—encoding side information about the system, such as physical laws—must be carefully designed to ensure that the controlled trajectory moves in a nearly optimal direction. 
The key difference highlighted in this paper is that we can utilize the knowledge of the GRS and the proxy system that generates it for control design. This framework relaxes the requirement for potential side information from the unknown system and provides a reachability guarantee.



The outline of this paper is as follows. In Section \ref{sec: pre}, we discuss the preliminary knowledge we assume, which is  in line with assumptions in \cite{shafa2022maximal,Shafa23Reachability}. In Section \ref{sec: proxy_preliminaries}, we show the necessary properties of the underapproximated proxy control system, \ymmarkr{particularly in identifying a path to a provably reachable desired state, which serves as a reference trajectory for the original system to follow. In Section \ref{sec: control}, we prove that    the proposed framework enables the true system's state to track the reference path and converge to the desired state.} 
We then present an algorithm for applying this control method.  Lastly, we conduct a case study to investigate the effectiveness of the proposed algorithm, as detailed in Section \ref{sec: case}.


\textbf{Notation:} 
We denote the Euclidean space by $\R^d$ for $d>1$.  We denote $\R$ the set of real numbers, and $\R_{\geq 0}$ the set of nonnegative real numbers.  
The closed ball of radius $r$ centered at $x\in\R^d$ is denoted by $\oball^d(x;r):=\{y\in\R^d: |y-x|\leq  r\}$, where $|\cdot|$ is the Euclidean norm. 
For a given set $A\subseteq\R^d$, 
$\inte(A)$ denotes its interior, and
$\partial A$ denotes its boundary.  For a closed set $A\subseteq \R^d$ and $x\in\R^d$, we denote the distance from $x$ to $A$ by $|x|_A=\inf_{y\in A}|x-y|$. 
For a matrix $M$, $\|M\|$ denotes its Euclidean norm: $\|M\|=\sup_{|v|=1}|Mv|$, $M^\dagger$ denotes the Moore-Penrose
pseudoinverse, $\operatorname{Im}(M)$ denotes the image. For matrices $M$ and $N$, \ymmarkr{we say  $M\in\operatorname{Imm}(N)$ if all columns of $M$ lie in $\operatorname{Im}(N)$}. 



\section{Preliminaries}\label{sec: pre}

Let  the admissible set of inputs be $\uu=\oball^m(0;1)$, 
which is a common setting in reachability analysis \cite{margaliot2007reachable,vinter1980characterization}. 
We consider the following unknown 
dynamical system defined by
\begin{small}
    \begin{equation}\label{E: sys}
    \dot{\xb}(t) = f(\xb(t)) + G(\xb(t))\ub(t), \;\;\xb(0) = x_0, 
\end{equation}
\end{small}

\noindent where for all $t\geq 0$, $\xb(t)\in\R^d$; $\ub:\R_{\geq 0}\ra \uu$.  The mappings $f:\R^d\ra\R^d$, and $G:\R^d\ra\R^{d\times m}$ are 
Lipschitz continuous with Lipschitz constants $L_f, L_G\geq 0$. We introduce $\lmax:=\max\set{L_f, L_G}$ for future references. 

For any initial condition $x_0\in\R^d$, we denote by $\phi_{\ub}(\cdot, x_0):[0, \infty)\ra\R^d$ the controlled flow map (solution) under $\ub$, and by $\Phi(x_0; \uu)$ the set of solutions. We simply use  $\phi_{\ub}(\cdot)$ if we do not emphasize the initial
condition. For any $T\geq 0$, \ymmarkr{the set of states reached by solutions starting from $x_0$ at time $T$ is defined as $\rr^T(x_0):= \set{\phi_\ub(T):  \phi_\ub(\cdot)\in\Phi(x_0;\uu)}$, and we also define the reachable set as $\rr^{\leq T}(x_0):=\cup_{t\in[0, T]}\rr^t(x_0)$.} 

For the reachability analysis and control synthesis, we propose the following hypotheses that are consistent with those presented in \cite[Section II.A]{Shafa23Reachability}:   
(H1) $f$ and $G$ are of the form $f(x) = Bh(x)$
and $G(x) = BH(x)$ where $B\in\R^{d\times m}$ is a constant matrix, 
$h: \R^d\ra\R^m$, and $H:\R^d\ra\R^{m\times m}$ such that $H(x_0)$  is
invertible; (H2)   $L_f$ and $L_G$ are known, as well as values
$f(x_0)$ and $G(x_0)$ such that $G(x_0)\neq 0$. We also assume knowledge of
$\operatorname{Im}(B)$ and $\operatorname{Imm}(B)$. However, we do not need to know the value of
$B$ exactly.

For convenience, we denote by $\ddc$  the set of all pairs $f$ and $G$ consistent with H1 and H2  for the same $L_f$ and $L_G$. \ymmarkr{We also denote $C:=\|G(x_0)\|\|G(x_0)^\dagger\|$}. 

\begin{rem}
    The settings are motivated by situations where damage to a  system can  alter its dynamics, with these changes captured by system \eqref{E: sys}. In practice, an online approximation of  $f(x_0)$ and $G(x_0)$ can be achieved with sufficient accuracy 
\cite{el2023online,ornik2019control}. However, for clearer illustration, we assume that the information at $x_0$ is perfectly known, allowing us to further explore the system’s remaining capabilities.  

Additionally,  the recent work in \cite{shafa2024guaranteed} extends \cite{shafa2022maximal, Shafa23Reachability} to Riemann manifolds, with applications for underactuated systems. This paper presents a control synthesis technique based on (H1) and (H2), with plans to generalize it based on more relaxed conditions for $f$ and $G$ given insights from \cite{shafa2024guaranteed}. 
 \Qed 
\end{rem}

\section{\bf Underapproximated Proxy Control System and Its Reachable Paths}\label{sec: proxy_preliminaries}

With limited knowledge about the system dynamics, we attempt to construct a control signal to reach a small neighborhood of a specified point  on the boundary of 
  the GRS, which is contained within the true reachable set. In this section, 
we revisit the underapproximated proxy control system for GRS evaluation \cite{Shafa23Reachability}, which can be viewed as supplementary information for the above reachability control problem. Particularly, we highlight the connections to the actual system \eqref{E: sys}. Then, we investigate properties related to the reachable paths and their corresponding control signals for the proxy system. 

\subsection{The Proxy System}\label{sec: proxy}
The underapproximated proxy control system is obtained through a connection to \eqref{E: sys} as shown in \cite[Theorem 1]{Shafa23Reachability}. We rephrase the statement as follows. 
 \begin{thm}\label{thm: track}
     Let $\uu$, $L_f$, and $L_G$ be given. Then,   there exists a $u\in\uu$ satisfying     \begin{small}
               \begin{equation}
         k\hat{u}=f(x) -f(x_0) + G(x)u 
     \end{equation} 
      \end{small} 
      
      \noindent for all  $x\in\B:=\{x: |x|\leq \|G(x_0)^\dagger\|^{-1}/(L_f+L_G)\}$, 
      any $\hat{u}\in\oball^d(0;1)\cap \operatorname{Im}(G(x_0))$, and any $k$ such that $|k|\leq \|G(x_0)^\dagger\|^{-1}-(L_f+L_G)|x|$. \Qed
 \end{thm}

Consequently, by \cite[Theorem 3]{Shafa23Reachability}, the GRS 
of \eqref{E: sys} can be calculated based on a proxy equation  of the  form 
\begin{small}
    \begin{equation}\label{E: proxy}
        \dot{\hxb}(t) = a + (b - c|\hxb(t)|) \hub(t), \;\;\hxb(0)=x_0, 
    \end{equation}
\end{small}

\noindent on the domain $\B$, where $a = f(x_0)$, $b:=\|G(x_0)^\dagger\|^{-1}$,  $c: = L_f+L_G$, and $\hub: [0, \infty)\ra\huu$ for  $\huu = \oball^d(0;1)\cap \operatorname{Im}(G(x_0))$. \ymmarkr{It can also be verified from (H1) and (H2) that $a\in \operatorname{Im}(G(x_0))$ \cite{Shafa23Reachability}.} 
For any $x_0\in\inte(\B)$, we denote by $\hphi_{\hub}(\cdot, x_0):[0, \infty) \ra\R^d$ the controlled flow map (solution) under $\hub$, by $\hPhi(x_0; \huu)$ the set of solutions. We also use  $\hphi_{\hub}(\cdot)$ to denote the solution when the initial condition is not emphasized. Let $T\geq 0$,  \ymmarkr{the reachable set at time $T$ and up to   $T$ (i.e., the GRS up to $T$) are denoted by $\hrr^T(x_0)$   and $\hrr^{\leq T}(x_0)$, respectively.  }

Then, for all $T\geq 0$ and all $x_0\in \B$, $\hrr^{\leq T}(x_0)\subseteq \rr^{\leq T}(x_0)$.
Eq.~\eqref{E: proxy} demonstrates the `slowest' growing rate for all $(f,G)\in\ddc$. It is worth noting that the  proxy system \eqref{E: proxy} is not an approximation of the true dynamics. \ymmarkr{For simplicity, we assume \( x_0 = 0 \). Otherwise, shifting coordinates via \( \tilde{x} = x - x_0 \) yields \( \dot{\tilde{x}} = \tilde{f}(\tilde{x}) + \tilde{G}(\tilde{x}) u \), with \( \tilde{f}(\tilde{x}) = f(\tilde{x} + x_0) \), \( \tilde{G}(\tilde{x}) = G(\tilde{x} + x_0) \), and \( \tilde{x}_0 = 0 \). The parameters \( a = \tilde{f}(0) \), \( b = \|\tilde{G}(0)^\dagger\|^{-1} \), and \( c \) in the proxy dynamics after the coordinate shift remain unchanged.
}



\subsection{Reachable Path of the Proxy System}\label{sec: reference_path}
In view of Theorem \ref{thm: track}, for any path generated by \eqref{E: proxy}, there always exists a control signal $\ub$ for \eqref{E: sys} that ensures the system to follow the same path. \ymmarkr{This motivates us to identify a valid reachable path generated by system \eqref{E: proxy}, which can approach a  small neighborhood of a target point on the boundary of GRS.} To do this, we  investigate the proxy system in this subsection \ymmarkr{beyond the properties stated in \cite{Shafa23Reachability}, specifically for the purpose of control synthesis}. Below, we present some facts  for  \eqref{E: proxy}. The proofs can be found in Appendix \ref{sec: proof_proxy_preliminaries}.

\ymmark{
\begin{prop}\label{lem: fact} 
We assume that there exists a $\Omega\subseteq \mathbb{B}$ such that $|a|< b-c\cdot\sup_{x\in\Omega}|x|$. Consider system \eqref{E: proxy} on $\Omega$. 
Let $T$ be such that $\hrr^{\leq T}(x_0)\subseteq\Omega$. Then,  for
   any $y\in \partial \hrr^{\leq T}(x_0)$, 
   $\not\exists\hphi_{\hat{\mathbf{u}}}(\cdot)\in \hPhi(x_0; \huu)$ such that $\hphi_{\hat{\mathbf{u}}}(t) = y$ for any $t\in[0, T)$. 
\Qed
\end{prop}

The above property states that $\hrr^{\leq T}(x_0)$ expands monotonically as $T$ increases for  relatively small $|a|$\footnote{For values of $a$ that do not satisfy the condition in Proposition \ref{lem: fact}, specifying a point on $\partial \hrr^{\leq T}(x_0)$ may result in the point being reached at some $t < T$ due to the strong drift. Additionally, the predicted GRS significantly deviates from the true reachable set, even over a short time horizon, as demonstrated in \cite[Section V.A]{Shafa23Reachability}. Therefore, it would be less practical to consider the same reachability control task as discussed in the remainder of this article based on the knowledge of the GRS. Nevertheless, we provide a reachability analysis in Appendix D.}.  We therefore intend to work with $\partial \hrr^{T}(x_0)$ rather than $\partial \hrr^{\leq T}(x_0)$. In terms of computing $\partial \hrr^{T}(x_0)$, although existing methods  
offer guaranteed precision \cite{rungger2018accurate, ramdani2011computing, han2006reachability}, they require long computation times and are   unsuitable for online learning and control synthesis. We thus introduce \begin{small}
    $\partial\hrr_{\text{pse}}^{T}(x_0) := \{ \hphi_{\hub}(T) : \hub \equiv \nu,\, \nu \in \partial\hat{\mathcal{U}} \}$
\end{small} and adopt a Monte Carlo method for its computation.

\begin{prop}\label{prop: reachable_path_0}
 Given 
 a $T$ such that $\hrr^{\leq T}( x_0)\subseteq\inte(\mathbb{B})$.   Then, for $a= 0$, $\partial\hrr_{\text{pse}}^{T}(x_0)= \partial\hrr^{T}(x_0)$.  Consequently, for any $y\in \partial\hrr^{T}(x_0)$,  
 it can be reached under $\hub\equiv\frac{y}{|y|}$. 
\Qed
\end{prop}}

\ymmark{ 
When $a\neq 0$, $\partial \hrr_{\text{pse}}^{T}(x_0)$ is generally not equal to $\partial \hrr^{T}(x_0)$\footnote{We kindly refer readers to Appendix C and Proposition C.1   for counterexample, which indicate that the control signal that maneuvers   $\hphi_{\hub}$ to the boundary of the GRS always has full magnitude, but its direction may vary over time.}. However, we have the following approximation. 

\begin{prop}\label{prop: approx}
  Consider $\dot{\bar{\xb}}(t) = (b - c|\bar{\xb}(t)|)\hub(t)$ and its solution $\bar{\phi}_{\hub}(t)$. Then, 
  it follows that $\vartheta_a(t):=|\hphi_{\hub}(t) - \bar{\phi}_{\hub}(t) - at| \leq \frac{e^{ct}-(1+ct)}{c}|a|$. \Qed 
\end{prop}}

\ymmark{

Now, we  denote $\bar{\rr}^T(x_0)$ as the reachable set for the driftless $\bar{\phi}_{\cdot}(\cdot)$ at $T$. 
As a direct consequence of Proposition~\ref{lem: fact}-\ref{prop: approx}, for any $y \in \partial \hat{\rr}^T(x_0)$, there exists a control $\hub$ such that $\hat{\phi}_{\hub}(T) = y$, and hence $y\in\{x\in\R^n: |x|_{\partial \bar{\rr}^T(x_0) + aT} \leq \vartheta_a(T)\}$. By the same logic, for any $y \in \partial \bar{\rr}^T(x_0)+aT$,  we have $y\in\{x\in\R^n: |x|_{\partial \hat{\rr}^T(x_0)}\leq \vartheta_a(T)\}$. 
 Therefore, 
 in the Hausdorff distance, $d_H(\partial\bar{\rr}^T(x_0)+aT-\partial\hat{\rr}^T(x_0))\leq\vartheta_a(T)$. Similarly, one can apply Proposition \ref{prop: reachable_path_0} and show that $d_H(\partial\bar{\rr}^T(x_0)+aT-\partial\hat{\rr}_{\text{pse}}^T(x_0))\leq\vartheta_a(T)$. Note that, for sufficiently small $T$,  \begin{small}
     $\vartheta_a(T)$ is approximately bounded by $ |a|\cdot(T^2/2+\mathcal{O}(T^3))$. 
 \end{small} 

Let
$\rho := \sup_{{x \in \hrr^T_{\text{pse}}(x_0)}} |x|$
and
$\Delta_a := \frac{|a|}{b - c\rho}$.
For the remainder of the paper, we focus on the scenario where \begin{small}$\partial\hat{\rr}_{\text{pse}}^T(x_0)\subseteq\mathbb{B}$ and $\Delta_a < 1$. \end{small}
Note that for any $y \in \partial\hat{\rr}_{\text{pse}}^T(x_0) \subseteq \operatorname{int}(\mathbb{B})$, as verified by the definition of $\partial\hat{\rr}_{\text{pse}}^T(x_0)$, one can use $\hub \equiv \frac{y - aT}{|y - aT|}$ in the proxy system to reach $y$. On the other hand, based on the above facts about approximations using the driftless proxy system (which enable easier computation), we can verify that the trajectory created by $\hub \equiv \frac{y - aT}{|y - aT|}$ deviates from the straight-line segment toward $y$ by no more than \begin{small}
$\frac{cT^2}{2}\sqrt{|a|^2 - \langle a, y \rangle^2 / |y|^2} + \mathcal{O}(T^3)$,
\end{small} provided that $T$ is sufficiently small relative to $|a|$. One can then directly use the straight-line segment toward $y$ as the reference path for control synthesis in the next section, enabling simpler analysis and computation.

}

\ymmark{
}


    \section{Control Synthesis}\label{sec: control}
In this section, our objective is to synthesize a controller that leads the trajectory to eventually reach a neighborhood of a specified \ymmark{$\partial\hat{\rr}_{\text{pse}}^T(x_0)$} for some $T>0$.  
We take advantage of the properties of \eqref{E: proxy} as stated in Section \ref{sec: proxy_preliminaries} and search for control inputs for the system~\eqref{E: sys} such that \( \phi_{\ub}(t)  \) follows the   reference path toward \( y   \) with the same velocity as \( \hphi_{\hub}(t) \).

    In practice, however, it is impossible to accurately learn the transition of \eqref{E: sys} based on a single system run, and hence achieving an exact path-following strategy is unattainable. Considering this situation, to leverage the connection between systems \eqref{E: sys} and \eqref{E: proxy}  and fully utilize the known information, we must necessarily relax the reachable time $T>0$; that is, we consider finite-time reachability rather than reachability at the exact time $T$. We  create a `correct-by-construction' reference path from \eqref{E: proxy}, based on up-to-date trajectory information.

Furthermore, due to the limited information and potential nonlinearity of $f$ and $G$,  long-term predictability of the trajectory is not achievable. 
The controllers must 
provide  infinitesimal  direction to constantly track the reference path, 
which will in turn force the controlled 
$\phi_{\ub}(t)$ to be maintained within a small neighborhood of the reference path. To better illustrate the idea,  
we 
focus on the case where $a=0$.

\subsection{Preliminaries for System Learning}\label{sec: learning_pre}

In order to learn $f(x)+G(x)u$ at points along any single-run trajectory, 
we consider a piece-wise constant control $\ub$, where each piece has a duration of $\dt$. To be more specific, the controller can apply any $m+ 1$ affinely independent
constant inputs for a short period of time $\dt$  \cite{ornik2019control}. Thus, the entire learn-control
cycle is of length $\tau=(m+1)\dt$. 

Let $\tau_n:=n\tau$ for $n\in\N$, and let $\{\tau_n\}_n$ be the sequence of instants at which to decide velocity. 
Consider the index sets $\I:=\set{0, 1, \cdots, m}$ and $\I_0:=\set{1, 2,  \cdots, m}$. We then denote $\{u_{n,j}\}_{j\in\I}$ by the affinely independent sequence of constant inputs, where  $u_{n,j}$ is applied within $[\tau_n+j\dt, \tau_n+(j+1)\dt\ymmarkr{)}$ for each $j\in\I$. In this case, each $u_{n,0}$ is determined at $\tau_n$ by a strategy aimed at returning a nearly optimal direction attracted to the reference path. Meanwhile, $\{u_{n,j}\}_{j\in\I_0}$ are subsequent  inputs that are selected according to a fixed procedure. 

We propose the following inductive procedures to achieve the  control task, with a detailed explanation of their feasibility.  
The proofs can be found in Appendix \ref{sec: proof_control}. 

\subsection{Learning of System Dynamics}\label{sec: learning_dyn}
Suppose that $u_{n, 0}$  has been determined at $\tau_n$ for each $n$. 
We then proceed to learn the system dynamics, denoted as $v_x(u):=f(x)+G(x)u$
at $\tau_{n+1}$ for each $n$. 
We follow the learning algorithm in the reference \cite[Section V]{ornik2019control}, which is based on the trajectory information within each learn-control cycle, spanning the interval $[\tau_n, \tau_{n+1})$. This is achieved by leveraging the control affine form of system \eqref{E: sys},  and   by employing an
affinely independent sequence of constant inputs 
\begin{equation}\label{E: u_nj}
    u_{n,j}:=u_{n,0} + \Delta u_j
\end{equation}
within $[\tau_n+j\dt, \tau_n+(j+1)\dt]$ for each $j\in\I_0$.  Here, $\Delta u_j:=\pm\epsilon \eb_j$, for  $\epsilon>0$ representing a small amplitude, 
and $\{\eb_j\}_{j\in\I_0}$ being the set of orthornormal unit vectors in $\R^m$. 

Denote \begin{small}
    $M_0:= \max\set{\sup_{x\in\B}|f(x)|, \sup_{x\in\B}|G(x)|}$,  \ymmarkr{$C_0:=M_0(m+1)$, $C_1:=M_0(m+1)^2$, $C_2:=2M_0\lmax(m+1)^2$,  and 
    $C_3:=M_0\lmax(m+1)^3$}. 
\end{small}  Let \(x_{n, j}=\phi_\ub(\tau_n+j\dt, x_0)\) and $\xk_{n+1}:=x_{n,m+1}$. Then, it is clear that $\xk_{n+1}=x_{n+1, 0}$.  We   have the following approximation precision \cite[Lemma 4]{ornik2019control}: 
\begin{enumerate}
    \item[(1)] $|\phi_\ub(t_1, x_0)-\phi_\ub(t_2, x_0)|\leq \ymmarkr{C_0|t_2-t_1|}$, $\forall t_1, t_2\in[\tau_n, \tau_{n+1}]$. In particular, $\ymmarkr{|\xk_{n+1}-\xk_n|\leq C_1\cdot\delta t}$; 
    \item[(2)] $|\frac{(x_{n,j+1}-x_{n,j})}{\dt}- v_{x_{n,j+1}}(u_{n,j})|\leq \ymmarkr{(C_2/4)\cdot\dt}$, $\forall j\in\I$. 
    \item[(3)] $|v_{x_{n,j+1}}(u_{n,j})-v_{\xk_{n+1}}(u_{n,j})|\leq \ymmarkr{C_3\cdot\dt}$,  $\forall j\in\I$.
\end{enumerate}

By introducing the following class of parameterized control inputs, we can approximate the system dynamics using only the information from the trajectory data and the parameters. 
\begin{deff}\label{def: lambda}
\ymmarkr{Let \begin{small}
    $\Lambda:=\{\lambda=\{\lambda_j\}_{j\in\I}\subseteq\R_{\geq 0}: \sum_{j\in\I}\lambda_j=1\}$.
\end{small}}
   A parameterized control input w.r.t. \ymmarkr{$\lambda\in\Lambda$} is of the form
\ymmarkr{$u_{n,\lambda} :=\sum_{j\in\I}\lambda_ju_{n, j}$}. 
   We  define the set of parameterized control inputs as \ymmarkr{$\ymmarkr{\uu_{n,\lambda}:=\{u_{n,\lambda}\in\uu:\; \lambda\in\Lambda\}}$}. \Qed     
\end{deff}

Note that, by \cite[Theorem 5]{ornik2019control}, we have the following bound 
\begin{small}
    $\left|v_{\xk_{n+1}}(u_\lambda)-\sum_{j\in\I}\lambda_j(x_{n,j+1}-x_{n,j})/\dt\right|\leq \ymmarkr{\ee_\lambda(\dt, \epsilon)}$
\end{small} for $u_{n,\lambda}\in\uu_{n,\lambda}$ and 
    \ymmarkr{$\ee_\lambda(\dt, \epsilon)=2((4m^{\frac{3}{2}}+\epsilon)/\epsilon) \cdot C_3\cdot \dt$}.
    This bound,  which depends on two parameters,  indicates the necessity of considering their joint effect to converge to $0$, rather than adjusting each parameter individually.
We   then use the approximation $\tv_{\xk_{n+1}}(u_{n,\lambda}):=\sum_{j\in\I}\lambda_j(x_{n,j+1}-x_{n,j})/\dt$
of $v_{\xk_{n+1}}(u_{n,\lambda})$ for the decision-making at $\tau_{n+1}$. 

\subsection{Control Design at $\tau_n$}\label{sec: control_at_tau_n}

\subsubsection{Initial Control Input}

For $n=0$, the control input $u_{0,0}$ is considered to initialize the system \eqref{E: sys}, such that  the direction of unknown system within $[0, \dt]$ is expected to closely follow that of the proxy system  \eqref{E: proxy}. As $f(x_0)$ and $G(x_0)$ are known, we use the following argument to determine $u_{0, 0}$. 
Since $\dt >0$ is arbitrarily small, we have  $\phi_{\ub}(\delta t)    =\delta t\cdot (f(x_0)  + G(x_0)u_{0,0}) + \mathcal{O}(\delta t) = \delta t\cdot  (G(x_0)u_{0,0}) + \mathcal{O}(\delta t^2)$, 
where $\mathcal{O}(\dt^2)$ is a high order term w.r.t. $\dt$. 
On the other hand, in view of Proposition \ref{prop: reachable_path_0}, the control $\hub$ leading to the optimal direction for the proxy system \eqref{E: proxy} is the constant signal $\hub\equiv\frac{y}{|y|}$. Therefore, $\hphi_{\hub}(\delta t)    =   \frac{y}{|y|}\cdot \int_0^{\delta t} (b-c|\hphi_{\hub}(s)|) \;ds 
             =  (1-\epsilon)\frac{b\cdot y\cdot \delta t}{|y|}+\epsilon\frac{b\cdot y\cdot \delta t}{|y|}+ \mathcal{O}(\delta t^2)$, 
             recalling $b:=\|G(x_0)^\dagger\|^{-1}$ and $c: = L_f+L_G$. 
The parameter $\epsilon>0$, representing a small amplitude used in Equation \eqref{E: u_nj}, is reserved for future attempts to employ multiple small-wiggling, independent inputs $\{\Delta u_j\}$ for learning the dynamics, as discussed in Section \ref{sec: learning_dyn}.
By equating the first-order terms, $\dt\cdot(G(x_0)u_{0,0})=(1-\epsilon) b\cdot y\cdot \dt/|y|$, we can   set the  initial constant control $u_{0,0} = (1-\epsilon) \frac{G(x_0)^\dagger (y) }{\|G(x_0)^\dagger\||y|}$
for the system \eqref{E: sys}.  
   Recalling \eqref{E: u_nj}, the term $1-\epsilon$ is used to ensure that the sequence   $\set{u_{0, j}}$ is  a subset of $\uu$.

\subsubsection{Control Design at Other $\tau_n$}
Recall notations in \ref{sec: learning_dyn}. 
We aim to create a sequence   $\set{z_n}$, such that $z_n=\theta_ny$ and $\set{\theta_n}$ is an increasing sequence of real numbers within $[0, 1]$ converging to $1$. Then, within each learning cycle, we design control inputs to ensure that $\phi_\ub(t)$ approaches each $z_n$. In order to dynamically measure the distance of $\phi_\ub(t)$ with the reference path, we now introduce the function $d_z(x) = |x - z|^2$ for $z = \theta y$ and $\theta\in[0, 1]$. 

To guarantee feasibility, we need to demonstrate that the sequence $\set{z_n}$, which satisfies the previously mentioned property,  can be constructed based on the  trajectory data. Furthermore, for any $t\in(\tau_n, \tau_{n+1}]$, there should exist $\set{u_{n,0}}$ (along with $\set{u_{n,j}}$ introduced in Section \ref{sec: learning_dyn}) that also satisfies 
\begin{small}
    \begin{equation}\label{E: lie_derivative}
\begin{split}
            \dot{d}_{z_n}(\phi_\ub(t))=\langle\nabla d_{z_n}(\phi_\ub(t)),\; v_{\phi_\ub(t)}(\ub(t))\rangle<0. 
\end{split}
    \end{equation}
\end{small}

We first argue inductively to show that the above properties can be satisfied. We also refer to  Figure \ref{fig: lem} for a visualization. 

\begin{lem}\label{lem: z_n_geq_1}
    Let $n\geq 1$ and $r>0$ be fixed. Let $\ub(t)=u_{n,j}$ for $t\in[\tau_n+j\dt, \tau_n+(j+1)\dt)$ and for $j\in\I$. Let  $\xk_n$ be given and consider $z_n=\theta_ny$ with $|z_n-\xk_n|=r$ for some $\theta_n\in(0, 1)$. Suppose that $\dot{d}_{z_n}(\phi_\ub(t))<0$ for all $t\in[\tau_n, \tau_{n+1})$. Then, there exists a  $z_{n+1}=\theta_{n+1}y$ such that $\theta_{n+1}>\theta_n$ and $|z_{n+1}-\xk_{n+1}|=r$. Particularly, $r-|\xk_{n+1}-z_n|\leq |z_{n+1}-z_n|< 2r$.\Qed
\end{lem}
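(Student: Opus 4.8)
The plan is to reduce the statement to an elementary geometric fact about the ray $\theta\mapsto\theta y$ and a ball of radius $r$, after first converting the differential hypothesis into a strict decrease of distance. Throughout I use $\xk_n=\phi_\ub(\tau_n)$ and $\xk_{n+1}=\phi_\ub(\tau_{n+1})$, and recall $d_{z_n}(x)=|x-z_n|^2$ with $z_n=\theta_n y$.

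First I would integrate the hypothesis. Since $\ub$ is piecewise constant on $[\tau_n,\tau_{n+1}]$ and $f,G$ are Lipschitz, $\phi_\ub$ is absolutely continuous, so $t\mapsto d_{z_n}(\phi_\ub(t))$ is absolutely continuous with $\dot{d}_{z_n}(\phi_\ub(t))<0$ wherever the derivative exists (i.e. off the finitely many switching instants $\tau_n+j\dt$). Integrating gives $d_{z_n}(\xk_{n+1})<d_{z_n}(\xk_n)$, that is $|\xk_{n+1}-z_n|^2<|\xk_n-z_n|^2=r^2$, hence the strict inequality $|\xk_{n+1}-z_n|<r$. In particular $z_n$ lies strictly inside the ball $\oball^d(\xk_{n+1};r)$.

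Next I would establish existence of $\theta_{n+1}$. Consider $g(\theta):=|\theta y-\xk_{n+1}|$. Its square $g(\theta)^2=|y|^2\theta^2-2\theta\langle y,\xk_{n+1}\rangle+|\xk_{n+1}|^2$ is an upward parabola in $\theta$, so the sublevel set $\{\theta:g(\theta)<r\}$ is an open interval $(\theta_-,\theta_+)$ which, by the previous step, contains $\theta_n$ and satisfies $g(\theta_\pm)=r$. Setting $\theta_{n+1}:=\theta_+>\theta_n$ and $z_{n+1}:=\theta_{n+1}y$ yields $|z_{n+1}-\xk_{n+1}|=r$ as required. (Equivalently one may invoke the intermediate value theorem on $g$, using $g(\theta_n)<r$ and $g(\theta)\to\infty$ as $\theta\to\infty$.) Finally the two estimates follow directly from the triangle inequality: using $|z_{n+1}-\xk_{n+1}|=r$ and $|z_n-\xk_{n+1}|<r$,
\[
|z_{n+1}-z_n|\le|z_{n+1}-\xk_{n+1}|+|\xk_{n+1}-z_n|<2r,
\]
and
\[
|z_{n+1}-z_n|\ge|z_{n+1}-\xk_{n+1}|-|\xk_{n+1}-z_n|=r-|\xk_{n+1}-z_n|.
\]
Since $|\xk_{n+1}-z_n|<r$, the lower bound is strictly positive, which reconfirms $z_{n+1}\neq z_n$ and hence $\theta_{n+1}>\theta_n$.

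I do not anticipate a genuine obstacle: the content is elementary geometry once the differential condition has been integrated. The only points needing mild care are the passage from $\dot{d}_{z_n}<0$ to the strict inequality $|\xk_{n+1}-z_n|<r$, which relies on the absolute continuity of the flow under a piecewise-constant input, and the observation that the parabola argument automatically selects the exit point on the side $\theta>\theta_n$. If in addition one wanted $\theta_{n+1}\le 1$ so that $z_{n+1}$ stays on the segment toward $y$, this is not part of the present statement and would instead be ensured separately by choosing $r$ small relative to $|y|$.
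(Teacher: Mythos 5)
Your proposal is correct and follows essentially the same route as the paper's proof: integrate $\dot{d}_{z_n}<0$ to get $|\xk_{n+1}-z_n|<r$, solve the quadratic $|\theta y-\xk_{n+1}|^2=r^2$ for a root $\theta_{n+1}>\theta_n$, and conclude with the triangle inequality. Your version simply makes explicit two details the paper leaves implicit (the absolute-continuity argument behind the integration, and the parabola/interval reasoning that guarantees the larger root exceeds $\theta_n$).
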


\begin{figure}[!t]
\centerline{\includegraphics[scale = 0.34
]{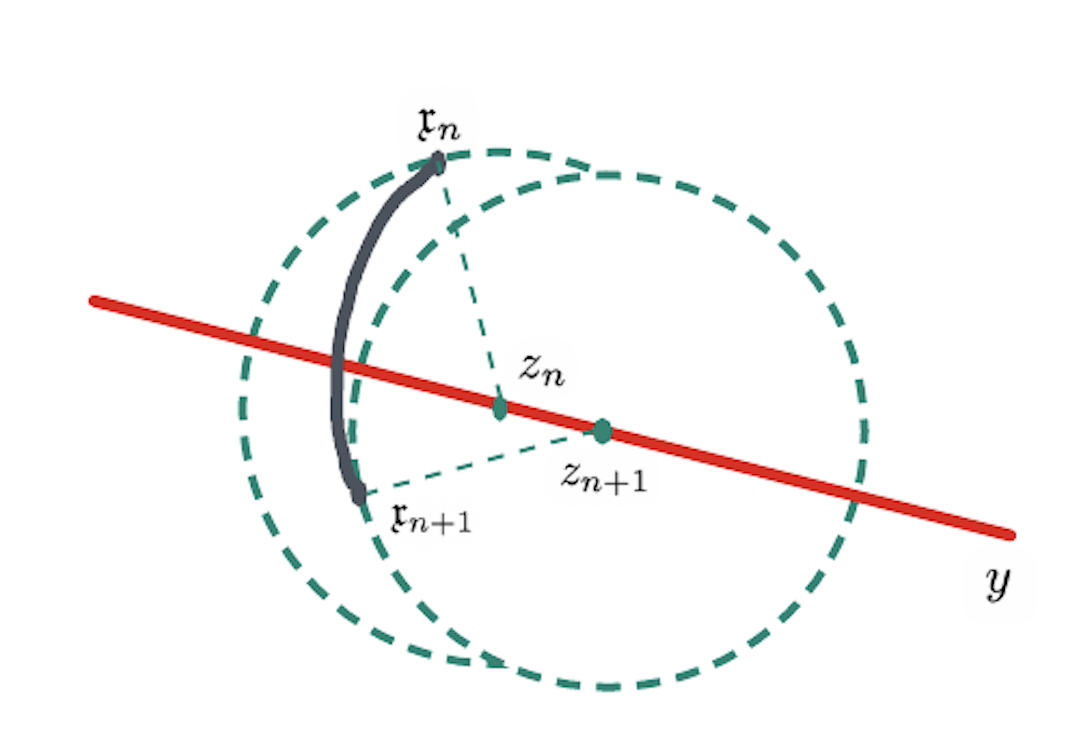}
}
\caption{An illustration of Lemma \ref{lem: z_n_geq_1}.  The point $z_n$ is given  as the point on the line segment between $0$ and $y$ with $|z_n-\xk_n|=r$. Suppose  $\dot{d}_{z_n}(\phi_\ub(t))<0$ for all $t\in[\tau_n, \tau_{n+1})$, then $|\xk_{n+1}-z_n|<r$, and $z_{n+1}$ can be found on the line segment with $|z_{n+1}-\xk_{n+1}|=r$ and moves closer to $y$. }
\label{fig: lem}
\end{figure}

It then suffices to show that there exists a determination strategy for $r$, $\epsilon$, and $\dt$, such that the above construction of $\set{z_n}$ can be initialized, and there exists $\set{u_{n,0}}$ 
such that $\dot{d}_{z_n}(\phi_\ub(t))<0$ for all $t\in[\tau_n,\tau_{n+1})$.  In selecting $r$ to incorporate the learning process described in Section \ref{sec: learning_dyn}, we consider the quantity $r:=r(k, \delta t) = 
\sup_{\{\hub\equiv\hat{u}\in\partial\huu\}}|\hphi_{\hub}(k\tau, x_0)-x_0|$. The following lemma \ymmarkr{provides a set of conditions on $k$, $\epsilon$, and $\delta t$ under which the sequence ${z_n}$ can be initialized; these conditions can always be satisfied through appropriate tuning, particularly with sufficiently small $\epsilon$ and $\delta t$.}

\begin{lem}\label{lem: r_initial}
    Let  $\ub(t)=u_{0,j}$ for $t\in[j\dt, (j+1)\dt)$ and for $j\in\I$. 
    Consider $z_0=x_0=0$. Suppose $\epsilon$ and $\dt$ 
    satisfy   \ymmark{$\epsilon>(C_3/C)\delta t $. Recall   \begin{small}
    $\rho:=\sup_{\{x\in\hrr^T_{\text{pse}}(x_0)\}}|x|$.
\end{small} 
If we also have $k>Cb/(b-c\rho)$ and $2kb\tau/(b-c\rho)<T$}, 
    then, there exists a  $z_{1}=\theta_{1}y$ such that $\theta_{1}\in(0, 1)$ and $|z_{1}-\xk_{1}|=r:=r(k, \dt)$.\Qed
\end{lem}

We now  show the existence of $u_{n,0}$ such that \eqref{E: lie_derivative} holds  
for all $n$. The following statement verifies the existence of the control input for \eqref{E: sys} such that \eqref{E: lie_derivative} is satisfied at $\tau_n$.

\begin{prop}\label{prop: sign_tau_n}
For each $n\geq 1$, let $\xk_n$ be
given and $z_n$ be determined by Lemma \ref{lem: z_n_geq_1} with $r:=r(k, \dt)$.
    Then, there exist a $u\in\uu$ such that $\langle\nabla d_{z_n}(\xk_n),\; f(\xk_n)+G(\xk_n)u\rangle \leq -2r(b-c|\xk_n|).$ Particularly, we can determine $u$ as   $\operatorname{argmin}_{u\in\uu}\langle\nabla d_{z_n}(\xk_n),\; f(\xk_n)+G(\xk_n)u\rangle$. \Qed
\end{prop}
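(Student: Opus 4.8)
The plan is to reduce the claimed inequality to a direct application of Theorem \ref{thm: track}. Since $d_z(x)=|x-z|^2$, we have $\nabla d_{z_n}(\xk_n)=2(\xk_n-z_n)$, so the quantity to be bounded is $2\langle \xk_n-z_n,\; f(\xk_n)+G(\xk_n)u\rangle$. By Lemma \ref{lem: z_n_geq_1} we have $|z_n-\xk_n|=r$, so I would introduce the unit vector $w:=(z_n-\xk_n)/r$ and note that $\langle \xk_n-z_n,\;\cdot\;\rangle=-r\langle w,\;\cdot\;\rangle$. It therefore suffices to produce a $u\in\uu$ whose velocity $v_{\xk_n}(u)=f(\xk_n)+G(\xk_n)u$ has component at least $b-c|\xk_n|$ along $w$; in fact I will aim for the velocity to equal $(b-c|\xk_n|)w$ exactly, which will turn the claim into an equality.

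The key step is to verify that $w$ is an admissible direction for Theorem \ref{thm: track}, i.e. $w\in\huu=\oball^d(0;1)\cap\operatorname{Im}(G(x_0))$, and this is the step I expect to be the main obstacle. Membership in $\oball^d(0;1)$ is immediate since $|w|=1$. For the image condition I would invoke hypothesis (H1): writing $f=Rr$, $G=RH$ with $H(x_0)$ invertible gives $\operatorname{Im}(G(x_0))=\operatorname{Im}(R)$, and every velocity $f(x)+G(x)u=R(r(x)+H(x)u)$ lies in $\operatorname{Im}(R)$. In this subsection $a=f(x_0)=0$ and $x_0=0$, so the true trajectory satisfies $\dot\phi_\ub\in\operatorname{Im}(R)$ and hence remains in the subspace $\operatorname{Im}(R)=\operatorname{Im}(G(x_0))$; in particular $\xk_n\in\operatorname{Im}(G(x_0))$. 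Similarly, $y\in\hrr(T,x_0)$ lies in $\operatorname{Im}(G(x_0))$ because the proxy velocity $a+(b-c|\hxb|)\hub$ stays in $\operatorname{Im}(G(x_0))$ when $a=0$, so $z_n=\theta_n y\in\operatorname{Im}(G(x_0))$. Since $\operatorname{Im}(G(x_0))$ is a linear subspace, $w=(z_n-\xk_n)/r$ belongs to it.

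With admissibility established, I would apply Theorem \ref{thm: track} at $x=\xk_n$ (which lies in $\B$, and in the region $b-c|\xk_n|\geq0$, by the inductive construction) with $\hat u=w$ and $k=b-c|\xk_n|$; the hypothesis $|k|\leq b-c|\xk_n|$ then holds with equality, so the theorem supplies a $u\in\uu$ with $f(\xk_n)+G(\xk_n)u=(b-c|\xk_n|)w$. Substituting and using $\langle \xk_n-z_n,\,w\rangle=-|z_n-\xk_n|^2/r=-r$ gives
$$\langle \nabla d_{z_n}(\xk_n),\; f(\xk_n)+G(\xk_n)u\rangle = 2(b-c|\xk_n|)\langle \xk_n-z_n,\;w\rangle = -2r(b-c|\xk_n|),$$
which is the desired bound (attained with equality). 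Finally, since this particular $u$ attains the value $-2r(b-c|\xk_n|)$ of the linear-in-$u$ functional $\dot{d}_{z_n}(\xk_n)$ over $\uu$, the minimizer $\operatorname{argmin}_{u\in\uu}\dot{d}_{z_n}(\xk_n)$ attains a value no larger and hence also satisfies the inequality, justifying the stated choice of control.
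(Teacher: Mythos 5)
Your proof is correct and follows essentially the same route as the paper's: both arguments produce, via Theorem \ref{thm: track} (with $a=f(x_0)=0$), a true-system input whose velocity equals the proxy velocity $(b-c|\xk_n|)\,(z_n-\xk_n)/r$, compute the inner product with $\nabla d_{z_n}(\xk_n)=2(\xk_n-z_n)$ to get exactly $-2r(b-c|\xk_n|)$, and then pass to the $\operatorname{argmin}$ over $\uu$. The only difference is presentational: you explicitly verify the admissibility $w=(z_n-\xk_n)/r\in\oball^d(0;1)\cap\operatorname{Im}(G(x_0))$ from hypothesis (H1), whereas the paper leaves this implicit by citing Proposition \ref{prop: reachable_path} to assert that $z_n$ is reachable under the constant proxy control $\hub\equiv(z_n-\xk_n)/|z_n-\xk_n|$.
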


We then continue to show that, within each learning cycle $[\tau_n, \tau_{n+1})$, by considering the subsequential small-wiggling inputs $\{\Delta u_j\}$ as described in Section \ref{sec: learning_dyn}, we have $\dot{d}_{z_n}(\phi_\ub(t))<0$ for all $t\in[\tau_n, \tau_{n+1})$ for small $\dt$ and $\epsilon$. 

\begin{thm}\label{thm: main_direction}
Recall notation $b$ and $c$ in \eqref{E: proxy}.  \ymmarkr{Define $\ee_r(\dt, \epsilon):=C_2\dt+\epsilon M_0$ and $\ee_n(\dt, \epsilon):=2M_0C_1\dt+C_1\ee_r(\dt, \epsilon)$.   Suppose that $  \ee_n(\dt, \epsilon)\leq r(b-c|\xk_n|-\ee_r(\dt, \epsilon)) $}. Let $\ub(\tau_n)=u$ be the control input as in Proposition \ref{prop: sign_tau_n}. Let $\ub(t)=u_{n,j}$ for all $t\in[\tau_n+j\dt, \tau_n+(j+1)\dt)$ and for each $j\in\I$. 
 Then $\dot{d}_{z_n}(\phi_\ub(t))<0$ for all $t\in[\tau_n, \tau_{n+1})$. \Qed
\end{thm}

\begin{rem}
For $t\in[\tau_n, \tau_{n+1})$, 
the deviation  $|\phi_\ub(t)-\xk_n|$ should be small 
as presented in  \cite[Lemma 4]{ornik2019control}. 
Consequently, the uncertain deviations $|f(\phi_\ub(t)) - f(\xk_n)|$ and $|G(\phi_\ub(t))-G(\xk_n)|$ are small. The condition \ymmarkr{on the small terms $\ee_n$ and $\ee_r$}  ensures that \ymmarkr{the attracting force $-2r(b - c|\xk_n|)$ at each $\tau_n$ (as in Proposition \ref{prop: sign_tau_n}) continues to dominate over the interval $(\tau_n, \tau_{n+1})$}, so that the flow remains attracted to $z_n$ even in the presence of such uncertainty. 

  Recalling that $r=r(k, \dt)$ represents  the maximal length of $\phi_\ub(k\tau, x_0)$ under constant inputs for   \eqref{E: proxy}, to satisfy the stated condition, one needs consider the joint effect of   $k$ and $\dt$. 
    Note that for $\dt$ sufficiently small, $r\approx kb\tau$.  
    For small $b$ and $k=1$,   \ymmarkr{the main contributor  $2M_0C_1\dt\leq r(b-c|\xk_n|)$ in the condition stated in Theorem \ref{thm: main_direction} may not hold.} 
    However, this is not problem in view of  the proof of Lemma \ref{lem: r_initial}. We consider the \ymmarkr{joint effect of tuning $r$ by increasing $k$ and reducing $\dt$, ensuring that $r$ increases but not to an arbitrarily large value, so as to preserve reachability precision}. \Qed
\end{rem}

So far, we have demonstrated that the control input can be determined at $\tau_n$ to satisfy the requirement in Lemma \ref{lem: z_n_geq_1} by searching over $\uu$. 
However, the dynamic learning procedure,  as demonstrated in Section \ref{sec: learning_dyn},  only permits the use of a subset of $\uu_{n,\lambda}$ and achieves an approximation of the velocity. Therefore, we need to take the approximation error into account. \ymmarkr{Recall Definition \ref{def: lambda} and $\tv_{\xk_{n+1}}(u_{n,\lambda})$ in Section \ref{sec: learning_dyn}. Define $\dot{\tilde{d}}_{z_n}(n, \lambda):=2\langle \xk_n-z_n, \tv_{\xk_{n+1}}(u_{n,\lambda})\rangle$.} We then look at the following slight modification of \cite[Theorem 9]{ornik2019control}.
\begin{thm}\label{thm: suboptimal}
   For a fixed $n\geq 1$, we have \begin{small}
        \ymmarkr{$\left|\min\limits_{\lambda\in\Lambda}\dot{\tilde{d}}_{z_n}(n, \lambda)-\min\limits_{\ub(\tau_n)\in\uu}\dot{d}_{z_n}(\phi_\ub(\tau_n))\right|\leq 2\ee_\mu(\dt, \epsilon),$}
    \end{small}
    where \begin{small}
        $\ymmarkr{\ee_\mu(\dt,\epsilon)= (3L_d(C_4/C_3)  \ee_\lambda(\dt, \epsilon)
            +L_dC_0\dt)/2}$;
    \end{small}
 \ymmarkr{$L_d$} is the Lipschitz constant of $d_{z_n}$; \begin{small}
     $\ymmarkr{C_4=(M_0+1)(L_\text{max}+1)(m+1)^3}$.
 \end{small} \Qed
\end{thm}

Considering the sub-optimality using \ymmarkr{$\lambda\in\Lambda$ and hence $\uu_{n,\lambda}$}, we have the following guarantee. We omit the proof due to the similarities to the proof of Theorem \ref{thm: main_direction}. 
\begin{cor}\label{cor: decision}
\ymmarkr{Recall the error terms $C_r$ and $C_n$ in Theorem \ref{thm: main_direction}. 
Suppose that $\ee_n(\dt, \epsilon)+\ee_\mu(\dt, \epsilon)\leq r(b-c|\xk_n|-\ee_r(\dt, \epsilon)+\epsilon M_0)$. 
Let $\lambda^\star = \operatorname{argmin}_{\lambda\in\Lambda}\dot{\tilde{d}}_{z_n}(n, \lambda)$ and $\ub(\tau_n)=u_{n,0}:=(1-\epsilon)\sum_{j\in\I}\lambda^\star_ju_{n-1, j}$}  be the controller at $\tau_n$.
 Then $\dot{d}_{z_n}(\phi_\ub(t))<0$ for all $t\in[\tau_n, \tau_{n+1})$. \Qed
\end{cor}

\subsection{Summary of Algorithm}\label{sec: summary}
Recall notation $a$, $b$ and $c$ in \eqref{E: proxy}, \ymmarkr{as well as $\rho$ and $\Delta_a$ in Section \ref{sec: reference_path}. For $a \neq 0$ but with a small norm relative to $b$ (or small $\Delta_a$) as illustrated by \cite[Section V.C]{Shafa23Reachability}}, the GRS prediction maintains reasonable accuracy over a longer time horizon. \ymmark{In this scenario, the control synthesis technique and principle stated in Section~\ref{sec: control_at_tau_n} remain the same but require the following slightly modified sufficient conditions for determining the parameters. (i) In selecting $r = r(k, \delta t)$, we follow Lemma~\ref{lem: r_initial} under the conditions
\begin{small}
\begin{equation}\label{E: cond_2}
\epsilon > (C_3 / C)\delta t; \quad \frac{2kb\tau}{b - c\rho} < T; \quad k > \frac{Cb}{b - c\rho} + \Delta_a,
\end{equation}
\end{small}

\noindent which are the same as in the original case except for the last one.} (ii) To maintain $\dot{d}_{z_n}(\phi_{\ub}(t)) < 0$ for all $t \in [\tau_n, \tau_{n+1})$ for each $n\geq 1$, we follow Corollary~\ref{cor: decision} and choose the same controller as stated therein, but under a modified condition 
\begin{small}
    \begin{equation}\label{E: cond}
\begin{split}
\ymmarkr{\ee_n(\dt, \epsilon)+\ee_\mu(\dt, \epsilon)\leq r(b-c\rho-|a|-\ee_r(\dt, \epsilon)+\epsilon M_0)}.
\end{split}
\end{equation}
\end{small}

\noindent The proofs follow exactly the same procedure as the corresponding statements we modify from, and is therefore omitted due to repetition. 
The condition suggests that the value of $a$ does not alter the direction of attraction.
We summarize the algorithm for in Algorithm \ref{alg: one_time}. 

\begin{prop}\label{prop: accuracy}
    Following Algorithm \ref{alg: one_time}, 
    $\phi_\ub$ will eventually reach $\oball^d(y; 2r)$. \Qed
\end{prop}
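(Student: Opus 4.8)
The plan is to run the induction encoded in Algorithm \ref{alg: one_time}, combining the per-cycle descent guarantee of Corollary \ref{cor: decision} (and, for $a\neq 0$, its analogue under condition \eqref{E: cond}) with the geometric progress estimate of Lemma \ref{lem: z_n_geq_1}, and then to show that the induced sequence $\{\theta_n\}$ advances by a uniform positive amount per cycle, so that the stopping test $|z_n-y|<r$ is met after finitely many cycles.

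First I would set up the induction. By Lemma \ref{lem: r_initial}, for suitably small $\epsilon,\dt$ and an appropriate $k\geq 1$ the sequence is initialized: with $z_0=x_0=0$ there is a $z_1=\theta_1 y$, $\theta_1\in(0,1)$, satisfying $|z_1-\xk_1|=r:=r(k,\dt)$. Assume inductively that $z_n=\theta_n y$ has been produced with $|z_n-\xk_n|=r$. Since the parameters are chosen to meet \eqref{E: cond}, Corollary \ref{cor: decision} guarantees that the controller $u_{n,0}=(1-\epsilon)\operatorname{argmin}_{u\in\uu_\lambda}\dot{d}_{z_n}(\xk_n)$ together with the wiggling inputs $\{u_{n,j}\}_{j\in\I_0}$ yields $\dot{d}_{z_n}(\phi_\ub(t))<0$ for all $t\in[\tau_n,\tau_{n+1}]$. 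The hypotheses of Lemma \ref{lem: z_n_geq_1} are thereby satisfied, so there exists $z_{n+1}=\theta_{n+1}y$ with $\theta_{n+1}>\theta_n$ and $|z_{n+1}-\xk_{n+1}|=r$. This closes the induction and produces a strictly increasing sequence $\{\theta_n\}\subseteq(0,1)$; the same conclusion holds in the $a\neq 0$ case since, as noted in Section \ref{sec: summary}, the margin in \eqref{E: cond} keeps the attraction directed along $[x_0,y]$.

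Next I would quantify the progress. Lemma \ref{lem: z_n_geq_1} gives $|z_{n+1}-z_n|\geq r-|\xk_{n+1}-z_n|$, and since $z_{n+1}-z_n=(\theta_{n+1}-\theta_n)y$ this reads $\theta_{n+1}-\theta_n\geq (r-|\xk_{n+1}-z_n|)/|y|$. Because $\dot{d}_{z_n}(\phi_\ub(t))<0$ throughout the cycle and, by Proposition \ref{prop: sign_tau_n}, the descent rate at $\tau_n$ is at most $-2r(b-c|\xk_n|)$, integrating over $[\tau_n,\tau_{n+1}]$ yields $|\xk_{n+1}-z_n|^2=d_{z_n}(\xk_{n+1})\leq r^2-\gamma$ for a $\gamma>0$. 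The key point to secure here is that $\gamma$ is bounded below uniformly in $n$: since $z_n=\theta_n y$ lies on $[0,y]$ with $y\in\partial\hrr(T,x_0)\subseteq\B$ and $|\xk_n-z_n|=r$, we have $|\xk_n|\leq|y|+r$, so provided $y$ is interior enough to $\B$ and $r$ small, $b-c|\xk_n|$ (respectively $b-c|\xk_n|-2|a|$ when $a\neq 0$, as in \eqref{E: cond}) stays at least some $\beta>0$. This furnishes a uniform increment $\theta_{n+1}-\theta_n\geq (r-\sqrt{r^2-\gamma})/|y|=:\delta>0$.

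Finally, the uniform increment $\delta$ forces $\theta_n$ to exceed $1-r/|y|$ after at most $\lceil 1/\delta\rceil$ cycles, so the loop terminates at the first cycle $N$ with $(1-\theta_N)|y|=|z_N-y|<r$. At that cycle, using $|z_N-\xk_N|=r$ and the triangle inequality, $|\xk_N-y|\leq|\xk_N-z_N|+|z_N-y|<r+r=2r$, and since $\xk_N=\phi_\ub(\tau_N)$ this gives $\phi_\ub(\tau_N)\in\oball^d(y;2r)$, which is the claim. The main obstacle is precisely the uniform progress estimate of the third paragraph: one needs more than the mere negativity of $\dot{d}_{z_n}$ supplied by Corollary \ref{cor: decision}, namely a uniform negative upper bound on $\dot{d}_{z_n}$ over the full cycle so that $\gamma$ does not degenerate, and this hinges on keeping the trajectory inside $\B$ with $b-c|\xk_n|$ (or its $a\neq 0$ margin) bounded away from zero.
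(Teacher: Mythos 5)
Your proposal follows essentially the same route as the paper's proof: construct $\{z_n\}$ inductively via Lemma \ref{lem: z_n_geq_1} under the per-cycle descent guarantee of Corollary \ref{cor: decision}, argue the loop terminates at a finite $N$ with $|z_N-y|<r$, and conclude by the triangle inequality using $|z_N-\xk_N|=r$. If anything you are more careful than the paper, whose proof asserts finite termination directly "by the construction of $\set{z_n}$"; you correctly observe that the increment bound $|z_{n+1}-z_n|\geq r-|\xk_{n+1}-z_n|$ from Lemma \ref{lem: z_n_geq_1} could degenerate, and you supply the missing uniform-progress estimate (a uniform negative bound on $\dot{d}_{z_n}$ over each cycle, hence a uniform lower bound on $\theta_{n+1}-\theta_n$). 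Regarding the obstacle you flag, note that it is resolved by the algorithm's standing assumption: since $\mu(\dt,\epsilon)>0$, condition \eqref{E: cond} holding at every cycle forces $b-c|\xk_n|-2|a|\geq \mu(\dt,\epsilon)/\bigl((1-\epsilon)(r-M_0(m+1)^2\dt)\bigr)>0$ uniformly in $n$, which is exactly the margin your argument needs.
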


\setcounter{algorithm}{0}
\begin{algorithm}[H]\label{alg: alg}
	\caption{Control Synthesis}\label{alg: one_time} 
	\begin{algorithmic}[1]
      \Require $d$, $m$,  $x_{0, 0}:=x_0$, $T$, \ymmark{$y\in\partial \hat{\rr}^T_{\text{pse}}(x_0)$}, and $u_{0, 0}= (1-\epsilon) \frac{G(x_0)^\dagger (y -x_0) }{\|G(x_0)^\dagger\||y -x_0|}$. 
		\Require  $x_0=0$; and $\dt$, $\epsilon$, $k$  based on the conditions in \ymmark{\eqref{E: cond_2} and \eqref{E: cond}}, where $r = \sup_{\{\hub\equiv \hat{u}\in\partial\huu\}}|\phi_\ub(k(m+1)\dt, x_0)|$

  \State $n=0$.
  \Repeat 
  \State $\tau_n = n(m+1)\dt$.
  \For{$j$ \textbf{from} 0 \textbf{to} $m$}
 \State  $\ub(t)\equiv u_{n,j}$ for all $t\in[\tau_n + j\dt, \tau_n+(j+1)\dt)$ based on Eq.~\eqref{E: u_nj};
  \State $x_{n,j+1}=\phi_\ub(\dt, x_{n,j})$.
  \EndFor
 \State  $\xk_{n+1}=x_{n,m+1}$.
 \State Determine $z_{n+1}$ based on Lemma \ref{lem: z_n_geq_1}. 
 \State Let 
      \ymmark{$u_{n+1, 0}=(1-\epsilon)\operatorname{argmin}_{\lambda\in\Lambda}\langle 2(\xk_{n+1}-z_{n+1}), \sum_{j\in\I}\lambda_j(x_{n,j+1}-x_{n,j}) \rangle,$} 
 where $\sum_{j\in\I}\lambda_j = 1$ and $\Lambda$ are defined in Definition \ref{def: lambda}. 
 \State $n:=n+1$.
 \Until $|z_n-y|<r$. 
	\end{algorithmic}
\end{algorithm}

    \ymmarkr{Note that \eqref{E: cond_2} and  \eqref{E: cond} provide theoretical guidance for parameter tuning to make Algorithm \ref{alg: alg} work. All perturbation terms in \eqref{E: cond} depend only on $\delta t$, $\epsilon$, and $k$, and are either linear in $\epsilon$ and $\delta t$, or arise from $\mathcal{E}_\lambda$ \cite[Theorem 5]{ornik2019control}, which originates from the conservative estimation in \cite[Theorem 4]{ornik2019control}. These terms can be made arbitrarily small under the restrictions of \eqref{E: cond_2}. Although the theoretical effectiveness of Algorithm \ref{alg: alg} can be guaranteed over a short time span for each learning cycle, \cite[Theorems 4–5]{ornik2019control} can be relaxed to reduce the restrictiveness of parameter tuning and better accommodate practical use, which is of interest for future research.}

\section{Case Study}\label{sec: case}
We use a control system with decoupled quadrotor dynamics to illustrate the proposed control synthesis algorithm, building on the same example   discussed in    \cite[Section V.C]{Shafa23Reachability}.
We examine the scenario where a UAV collides with an obstacle, leading to undesired velocity rotations \cite{chowdhary2013guidance, jourdan2010enhancing}.
To support more complex tasks, such as safe landing, it is essential to determine a reachable set of pitch and roll velocities, denoted as $p$ and $q$ respectively,  whilst synthesizing control inputs that lead to specified pitch and roll velocities without prior knowledge of the system's dynamics. 

We focus on the situation where the inertia in the $\xb$- and $\yb$-axes are identical, allowing the yaw rate to be directly altered by increasing the corresponding torque action without impacting $p$ and $q$. To simplify the problem, we trivially reduce the yaw state to be
 constant $\pi/2$.

 \begin{figure}[!t]
\centerline{\includegraphics[scale = 0.30
]{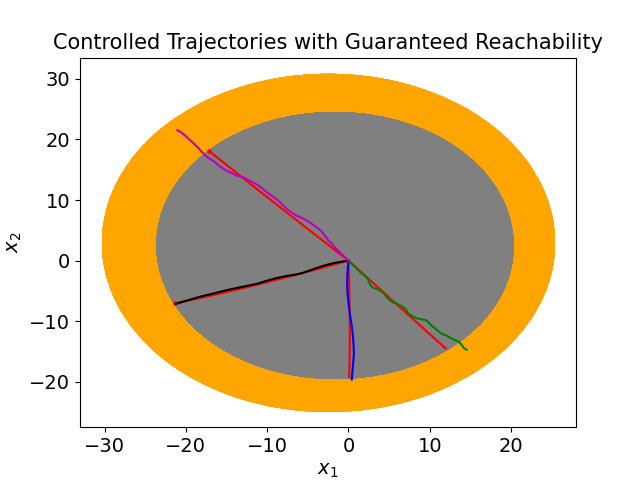}\includegraphics[scale = 0.30
]{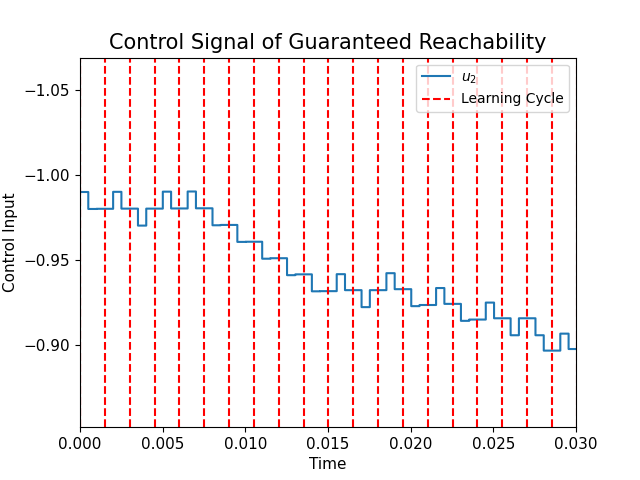}
}
\caption{(l.h.s.) Controlled trajectories for Scenario A-D. The grey region represents the GRS; 
the orange region indicates the true reachable set; the controlled trajectories for A-D, respectively, are shown in black, blue, green, and purple; the red trajectories represent the proxy controlled paths to four randomly sampled points on the boundary of the GRS for the true system aims to track. (r.h.s.) Selected partial control signals for Scenario B.}
\label{fig: path2}
\end{figure}

To apply the proposed algorithm, we let  the initial conditions
after collision be $(p_0, q_0)=(15, 10)$ radians per second, and then perform a coordinate transformation by introducing
$x_1=p-15$ and $x_2=q-10$.  The recast system is as follows.

\begin{small}
    \begin{equation}\label{E:sys2}
    \dfrac{d}{dt}\begin{bmatrix}
x_1(t)\\ x_2(t) 
\end{bmatrix}=\begin{bmatrix}
\frac{\pi(J_\yb-J_\zb)}{2J_\xb}(x_2(t)+10)\\
\frac{\pi(J_\zb-J_\xb)}{2J_\yb}(x_1(t)+15)
\end{bmatrix}+\begin{bmatrix}
\frac{1}{J_\xb}\tau_\phi\\ \frac{1}{J_\yb}\tau_\theta
\end{bmatrix},
\end{equation}
\end{small}

\noindent with initial conditions $x(0):=(x_1(0), x_2(0))=(0, 0)$ and 
torque inputs $(u_1, u_2):=(\tau_\phi, \tau_\theta) \in \oball^2(0; 1)$  for the roll and pitch. 
We also set 
$J_\xb = \frac{2MR^2}{5}+2l^2m, \;J_\yb=J_\xb, \;J_\zb=\frac{2MR^2}{5}+4l^2\text{m},$
where  $M=1\text{kg}$ and  $R=0.1\text{m}$ represent the mass and radius of the central frame. The central frame is connected to four point masses $\text{m}=0.1\text{kg}$, each representing one of the four propellers, positioned at an equidistant length of $l=0.5\text{m}$ from the central sphere.
Consequently, 
$a
\approx (-8.73, 13.09)^\top$,
and $b=\|G(0)^\dagger\|^{-1}\approx111.11$.  We also derive a conservative 
Lipschitz bounds to be $L_f=L_G =1$,  and therefore, $L_0=1$, and $\B=\oball^2(f(0); 111.11-2|x|)$ for the domain where \eqref{E: proxy} is well defined. 

\begin{rem}
    Note 
    that while the true system dynamics are described in \eqref{E:sys2}, they are unknown to the controller. The controller only has access to the trajectory data and the parameters required in Algorithm \ref{alg: one_time}. Hence, the choice of model does not affect the proposed control technique. Without real data, we can only leverage numerical ODE solvers based on the exact model  \eqref{E:sys2}  to generate trajectory data. 
    One can also randomly choose the aforementioned coefficients in \eqref{E:sys2}; regardless,  we only need an estimation of the parameters as mentioned in Section \ref{sec: proxy} for  GRS evaluation and subsequent control synthesis.  
    We would also like to refer readers interested in other simulations to \href{https://github.com/TahaShafa/reachable\_predictive\_control}{{https://github.com/TahaShafa/reachable\_predictive\_control}}, which includes video visualizations.  \Qed
\end{rem}

In the simulation, we aim to reach a neighborhood of any point on the boundary of the underestimated GRS for $T = 0.25$. 
To succinctly represent and  demonstrate the effectiveness of the proposed algorithm, we test it across four scenarios with different settings of the parameters $(\dt, \epsilon, k)$ and display the controlled paths in a single picture. Since the theoretical result on the guaranteed accuracy does not distinguish between target points, we randomly sample points on the boundary of the GRS. Detailed settings for $(\dt, \epsilon, k)$ are A: $ (0.0001, 0.005, 5)$; B: $(0.0005, 0.01, 6)$; C: $ (0.0008, 0.08, 12)$; 
D: $(0.0015, 0.10, 40)$. The theoretical accuracy, indicated by  $r$, can be computed based on the chosen parameters and is equal to $0.18, 1.11, 3.48, 18.83$ for the four cases, respectively. 

The controlled trajectories for A-D are presented  as shown in Figure \ref{fig: path2}. We choose to present only a zoomed-in view of the control signal for Scenario B to demonstrate the piecewise constant shape resulting from a short period of the learning cycle. 
The accuracy across scenarios A-D decreases as predicted. Specifically, Scenario A closely adheres to the trackable path, while  D significantly deviates from the intended trackable path.   As depicted in the r.h.s. picture of Figure \ref{fig: path2}, there are three sub-intervals within each learning cycle $n$, corresponding to $u_{n,j}$ for $j=0,1,2$. 

\section{Conclusion}
In this paper, we investigate the control 
properties of an underapproximated control system, whose reachable set represents a guaranteed reachable set of the true but unknown system. Then, we utilize the connection between this proxy system and the true system to synthesize controllers that guide the  trajectory to reach a neighborhood of a specified point within the guaranteed reachable set. 
The essence of the proposed method is to automatically generate a sequence of points $\set{z_n}$, converging to the required reachable point $y$, for the true trajectory to follow. This approach allows the use of historical data within a learning cycle to determine the trajectory's direction for the subsequent learning cycle. 

We also explore the sufficient conditions    for the controlled trajectory to approach each $z_n$ within every learning cycle, ensuring it will eventually reach a small neighborhood of the specified point $y$. It would be of significant engineering interest to \ymmarkr{improve the error estimation} and tune the parameters, as outlined in Algorithm 1, to automate the learning and control of an unknown system based on a single trajectory run.

For future work, building on the connections between \cite{Shafa23Reachability} and the recent work \cite{shafa2024guaranteed} on extending GRS evaluation to Riemannian manifolds, the proposed method in this study provides a foundation for controlling unknown systems under more relaxed assumptions, with applications to underactuated systems. 
Additionally, \ymmarkr{exploring how to control an unknown system to reach a region beyond the current short-time-frame estimation of the GRS is crucial}. This may involve online waypoint generation and adaptive control guided by iterative application of our current methods. 
\ymmarkr{As data accumulates through this process, recent advances in Koopman-based system identification \cite{meng2024resolvent, zeng2024data} can be leveraged for more precise dynamics learning. Robust control methods can then be employed to address the remaining control tasks with better performance, ultimately achieving resilience and guaranteed task completion for  unknown nonlinear control systems. }

\bibliographystyle{ieeetr}        
\bibliography{TAC}

\appendices

\section{Proofs in Section \ref{sec: proxy_preliminaries}}\label{sec: proof_proxy_preliminaries}

\textbf{Proof of Proposition \ref{lem: fact}:}
 We assume the contrary, as specified in the statement, then, there exists a $\hub$  and a $t\in[0, T)$ such that $\hphi_{\hub}(t)=y\in\partial \hrr^{\leq T}(x_0)$.  
   Then, 
    $\hphi_{\hub}(T) = y + a(T-t) + \int_t^T (b-c|\hphi_{\hub}(s)|)\hub(s) \;ds.$
Since $\huu$ is 
compact, convex, and centered at $0$, for any $s\in [t, T)$, \ymmark{we can find a $\hub(s)$ such that $\hub(s) = \frac{k(s) (y-x_0)-a}{b-c|\hphi_{\hub}(s)|}$, 
where $k(s)$ is some positive number. By assumption, it can be verified that  $\hub(s)\in\huu$ for all $s\in[t, T)$. Therefore, $\hphi_{\hub}(T)     = y + a(T-t) + (y- x_0)  \int_t^{T}k(s)ds - \int_t^T a ds  = y + (y- x_0)  \int_t^{T}k(s)ds.$
Let $\lambda: = 1 + \int_t^{\tau}k(s)>1$, then $\hphi_{\hub}(T)  = \lambda (y ) + (1-\lambda) x_0$. Since $y \in\partial \hrr^{\leq T}(x_0)$ and $x_0\in\inte(\hrr^{\leq T}(x_0))$, the point $\hphi_{\hub}(T) $ is located on the line extending from $x_0$ through $y$, and continues beyond $y$, in the direction from $x_0$ to $y$. Due to the convexity property of $\hrr^{\leq T}(x_0)$ as stated in \cite[Section III]{Shafa23Reachability}, and the above construction of the signal used to extend the trajectory outward from $x_0$ to $y$, we have that $\hphi_{\hub}(t) \notin \hrr^{\leq T}(x_0)$, which violates the assumption.}   We hence prove the statement. \pfbox

\noindent\textbf{Proof of Proposition \ref{prop: reachable_path_0}:}  \ymmark{Consider any $y\in\partial\hrr^T(x_0)$ and $\nu=\frac{y}{|y|}\in\partial \hat{\uu}$. For any $\hub$, define $\rho(t) = \langle \hphi_{\hub} (t), \nu\rangle$. Then, $\dot{\rho}(t) = (b-c|\hphi_{\hub} (t)|)\langle \hub(t), \nu\rangle$. Since $\rho(t) \leq |\hphi_{\hub} (t)|$ and $\langle \hub(t), \nu\rangle\leq |\hub(t)||\nu|=1$, we have $\dot{\rho}(t) \leq b-c\rho(t)$. On the other hand, let $\hub^*\equiv\nu$. Then, $\hphi_{\hub^*}(t)=\nu\int_0^t(b-c|\hphi_{\hub^*}(s)|)ds$ is in the same direction as $\nu$. Consider $\rho^*(t) = \langle \hphi_{\hub^*} (t), \nu\rangle$,  then  $\dot{\rho}^* (t) = (b-c|\hphi_{\hub^*}(t)|)\langle \nu, \nu\rangle = b- c\rho^*(t)$. By the \textit{Comparison Principle} \cite[Lemma 3.4]{khalil2002nonlinear}, we have  \( \langle \hphi_{\hub} (T), \nu \rangle \leq \langle \hphi_{\hub^*} (T), \nu \rangle \), which indicates that \( \hphi_{\hub^*} (T) \in \partial \hrr^T(x_0) \) according to the supporting hyperplane theorem \cite{boyd2004convex}. Additionally, by \cite[Corollary 3]{Shafa23Reachability}, $\hphi_{\hub^*} (T)$ can only be \( y \).
 The above indicates that $\partial\hrr^T(x_0)\subseteq\partial\hrr^T_{\operatorname{pse}}(x_0)$. By the fact that $\hrr^T(x_0)$ is convex and $\partial\hrr^T_{\operatorname{pse}}(x_0) \subseteq \hrr^T(x_0)$, we obtain $\partial\hrr^T_{\operatorname{pse}}(x_0) \subseteq \partial\hrr^T(x_0)$ \cite{tyrrell1970convex}, which proves the first part of the conclusion.  The second part follows immediately. \pfbox}

\noindent\textbf{Proof of Proposition \ref{prop: approx}}: \ymmark{Let $\eta(t) = \hphi_{\hub}(t)-at$, then $\eta$ solves $\dot{\eta}(t)=(b-c|\eta(t)+at|)\hub(t)$. We have $\vartheta_a(t) :=\eta(t)-\bar{\phi}_{\hub}(t)$.
Then, $\dot{\vartheta}_a(t) = (b-c|\eta(t)+at|)\hub(t)-(b-c|\eta(t)|)\hub(t)+(b-c|\eta(t)|)\hub(t)-(b-c|\bar{\phi}_{\hub}(t)|)\hub(t)$. Consequently, for $\vartheta_a(t)\neq 0$, 
\begin{equation*}
    \begin{split}
        \frac{d}{dt}|\vartheta_a(t)|  = & \vartheta_a(t)\cdot \dot{\vartheta}_a(t)/|\vartheta_a(t)|\leq |\dot{\vartheta}_a(t)|\\
         \leq &c||\eta(t)+at|-|\eta(t)||   + c||\eta(t)|-|\bar{\phi}_{\hub}(t)|| \\
         \leq & c|a|t + c|\vartheta_a(t)|,
    \end{split}
\end{equation*}
and \begin{small}
    $d(e^{-ct}|\vartheta_a(t)|)/dt \leq -ce^{-ct}|\vartheta_a(t)|+c|a|te^{-ct}+ce^{-ct}|\vartheta_a(t)|\leq c|a|te^{-ct}$.
\end{small} 
 Then, \begin{small}
     $e^{-ct}|\vartheta_a(t)|\leq c|a|\int_0^tse^{-cs}ds$,
 \end{small} and the conclusion follows.} 
 \pfbox

\section{Proofs in Section \ref{sec: control}}\label{sec: proof_control}

\noindent\textbf{Proof of Lemma \ref{lem: z_n_geq_1}:}
    Note that $\xk_{n+1}=\phi_{\ub}(\tau_{n+1})$. Given the assumptions that $\dot{d}_{z_n}(\phi_\ub(t))<0$ for all $t\in[\tau_n, \tau_{n+1})$, it is clear that $|\xk_{n+1}-z_n|<r$. To determine $z_{n+1}$, one needs to solve the equation $|z_{n+1}-\xk_{n+1}|^2 = r^2$ with $z_{n+1} = \theta_{n+1}y$. Since $|\xk_{n+1}-z_n|<r$, it can be guaranteed that there exists at least one solution such that  $\theta_{n+1}>\theta_n$.  Thus, the first part of the statement  holds. It is also clear that 
    $r-|\xk_{n+1}-z_n|\leq |z_{n+1}-z_n|\leq r+|\xk_{n+1}-z_n|<2r,$ 
    which completes the proof. \pfbox

\noindent\textbf{Proof of Lemma \ref{lem: r_initial}:}
    Recall the notation $\tau:=(m+1)\dt$. For $\dt>0$ arbitrarily small, we have
    \begin{small}
            \begin{equation}\label{E: tau_soln1}
\begin{split}
        \phi_{\ub}(\tau, x_0) & = \sum_{j\in\I}\int_{j\dt}^{(j+1)\dt}(f(x_0) + G(x_0)u_{0,j})ds+ \mathcal{O}(\delta t^2)\\
        & =\dt\cdot (\sum_{j\in\I}G(x_0)u_{0,j}) + \mathcal{O}(\delta t^2)\\
        & =\tau\cdot(G(x_0)u_{0,0}) + \sum_{j\in\I_0}\dt\cdot\epsilon G(x_0)\eb_j+ \mathcal{O}(\delta t^2), 
\end{split}
\end{equation}
    \end{small}
    
\noindent where $|\mathcal{O}(\delta t^2)|\leq \int_0^\tau |f(\phi_{\ub}(t))-f(x_0)|+\|G(\phi_{\ub}(t))-G(x_0)\|dt\leq \int_0^\tau 2L_0C_0tdt=L_0M_0(m+1)^3\delta t^2=C_3\cdot\dt^2$ \cite[Lemma 4]{ornik2019control}.  In addition, by the construction of $u_{0, 0}$, we have $\tau\cdot(G(x_0)u_{0,0}) 
    =\tau(1-\epsilon)G(x_0)G(x_0)^\dagger\frac{by}{|y|}$.  Define $\underline{r}:=k\tau(b-c\rho)$.  
We then have  

\begin{small}
    \begin{equation}\label{E: initial}
    \begin{split}
       |\phi_{\ub}(\tau, x_0)|  \leq &   |\tau(1-\epsilon)\frac{Cby}{|y|}|+ |\sum_{j\in\I_0}\dt\cdot\epsilon  \ymmark{G(x_0) \eb}_j|+ |\mathcal{O}(\delta t^2) |\\
        \leq &  {C}\tau(1-\epsilon)\ymmark{b}+ \ymmark{m\cdot\dt\cdot\epsilon \cdot  \|G(x_0)\|}+ \ymmark{|\mathcal{O}(\delta t^2)|}\\
        = & \ymmark{{C}\tau(1-\epsilon)b+ \ymmark{{C}\tau\cdot\epsilon b - {C}\epsilon\delta t+ |\mathcal{O}(\delta t^2)|}}\\
        \leq & \ymmark{\underline{r}  + ({C}b\tau-\underline{r})  - C\epsilon\delta t + C_3\delta t^2}\\
       < & \ymmark{r + ({C}b\tau-\underline{r})}, 
    \end{split}
\end{equation}
\end{small}

\noindent \ymmark{where,  in the last inequality, $r\geq|\hphi_{\hub}(k\tau,x_0)|=\int_0^{k\tau}(b-c|\hphi_{\hub}(t,x_0)|)dt>\underline{r}$ holds under all $\hub=\hat{u}\in\partial\huu$,  and $-C\epsilon\delta t + C_3\delta t^2$ is canceled due to the additional assumption on its sign given in the statement. Note that for $k$ such that $k>Cb/(b-c\rho)$, it follows from \eqref{E: initial} that   $|\phi_\ub(\tau, x_0)|< r$. Additionally,   for sufficiently small $\delta t$ and $T>\frac{2kb\tau}{b-c\rho}$,  we have $2r<2\int_0^{k\tau}(b-0)dt<(b-c\rho)T<|y|$.} By solving $(z_{1}-\phi_\ub(\tau))^2 = r^2$ with $z_{1} = \theta_{1}y$ and $\theta_1\in(0, 1)$, the claim in the statement follows immediately as $|\phi_\ub(\tau)-z_0|<r$ for  $z_0=0$ and $|z_1-z_0|<2r<|y|$. \pfbox




\noindent\textbf{Proof of Proposition \ref{prop: sign_tau_n}:}

   Note that, for each $n \geq 1$, \ymmark{$\hphi_{\hub}(t, \xk_n)$ lies on the line extending from $\xk_n$ through $z_n$ and moves toward $z_n$}, under the control $\hub(t) =  \frac{z_n - \xk_n}{\lvert z_n - \xk_n \rvert}$ \ymmark{before reaching $z_n$. We also have the instantaneous Lie derivative along the path of $\hphi_{\hub}(t, \xk_n)$ at $t=0$ as} $\dot{d}_{z_n}\ymmark{(\hphi_{\hub}(0, \xk_n))} = \ymmark{\langle\nabla d_{z_n}(\hphi_{\hub}(0, \xk_n)),\; (b-c|\hphi_{\hub}(0, \xk_n)|)\hub(0)\rangle}
                 $ $=  \left\langle 2(\xk_n-z_n),\;(b-c|\xk_n|)\hub(0)\right\rangle
                 = -2r(b-c|\xk_n|)$

  Recall $\phi_\ub(\tau_n,x_0) = \xk_{n}$. By Theorem~\ref{thm: track}, there exists a $\tilde{u}$ such that $f(\xk_n) + G(\xk_n)\tilde{u} = (b - c|\xk_n|)\hub(0)$. 
  Then, $\langle\nabla d_{z_n}(\xk_n),\; f(\xk_n)+G(\xk_n)\tilde{u}\rangle 
            =  -2r(b-c|\xk_n|)$. 
Therefore, $\min_{u\in\uu} \langle\nabla d_{z_n}(\xk_n),\; f(\xk_n)+G(\xk_n)u\rangle\leq -2r(b-c|\xk_n|)$.  The optimal input   $\operatorname{argmin}_{u\in\uu}\langle\nabla d_{z_n}(\xk_n),\; f(\xk_n)+G(\xk_n)u\rangle$ satisfies the property in the statement. \pfbox

\noindent\textbf{Proof of Theorem \ref{thm: main_direction}:}
Let $n$ be fixed. 
For simplicity, we use the shorthand notation $\xb(t):=\phi_{\ub}(t, x_0)$ for any $t\in[\tau_n, \tau_{n+1})$. 
    Let $v(\xb(t)):=f(\xb(t))+G(\xb(t))\ub(t)$,  $v(\xk_n):=f(\xk_n)+G(\xk_n)\ub(\tau_n)$, 
   Then, $|v(\xb(t))-v(\xk_n)|\leq |f(\xb(t))-f(\xk_n)|+|G(\xb(t))\ub(\tau_n)-G(\xk_n)\ub(\tau_n)+G(\xb(t))(\ub(t)-\ub(\tau_n))|\leq 2L_0C_0(t-\tau_n)+\epsilon M_0$, and 
   \begin{small}
           \begin{equation}\label{E: ineq_thm}
    \begin{split}
                & \langle(\xb(t)-z_n),\;v(\xb(t))\rangle \\ 
                = & \langle \xk_n-z_n,\;v(\xk_n)\rangle   + \langle \xb(t)-\xk_n,\;v(\xk_n)\rangle\\
                & + \langle \xb(t)-\xk_n,\; v(\xb(t))-v(\xk_n)\rangle  + \langle \xk_n-z_n,\;v(\xb(t))-v(\xk_n)\rangle\\
                \leq & -r(b-c|\xk_n|)+ 2M_0|\xb(t)-\xk_n|\\
                & + |\xb(t)-\xk_n||v(\xb(t)-v(\xk_n)| + r|v(\xb(t)-v(\xk_n)|\\
                \leq & -r(b-c|\xk_n|) +\ymmarkr{2M_0C_0(t-\tau_n)}\\
                & +\ymmarkr{(C_0(t-\tau_n)+r)\cdot(2L_0C_0(t-\tau_n)+\epsilon M_0) }  \\
                < & -r(b-c|\xk_n|) + \ymmarkr{2M_0C_1\dt+(C_1\dt+r)\cdot(C_2\dt+\epsilon M_0)}\\
                = & -r(b-c|\xk_n|) + \ymmarkr{r\cdot \ee_r(\dt, \epsilon)+ \ee_n(\dt, \epsilon)}
    \end{split}
    \end{equation}
   \end{small}

\noindent 
The conclusion follows by the assumption. \pfbox

\noindent\textbf{Proof of Proposition \ref{prop: accuracy}:}
\ymmark{Recall \eqref{E: cond} and let
$\Gamma = \ee_n(\dt, \epsilon) + \ee_\mu(\dt, \epsilon) - r(b - c|\xk_n| - |a| - \ee_r(\dt, \epsilon) + \epsilon M_0)$.
Consider $\Delta_1 := (2M_0C_0 + 2rL_0C_0 + \epsilon M_0)$ and $\Delta_2(t) := 2rL_0C_0$. Then, for each $n$ and for $t \in [\tau_n, \tau_{n+1})$, by a similar argument in Theorem~\ref{thm: main_direction}, we have
$\frac{1}{2}\dot{d}{z_n}(\phi_\ub(t)) \leq \Gamma - \Delta_1 \cdot (\tau_{n+1} - t) - \Delta_2 \cdot (\tau^2 - (t - \tau_n)^2) < \Gamma \leq 0$.
Therefore,
$d_{z_n}(\phi_\ub(\tau_{n+1})) - d_{z_n}(\phi_\ub(\tau_n)) \leq 2 \int_{\tau_n}^{\tau_{n+1}} [\Gamma - \Delta_1 \cdot (\tau_{n+1} - t) - \Delta_2 \cdot (\tau^2 - (t - \tau_n)^2) ] dt = 2\Gamma \tau - \Delta_1 \tau^2 - \frac{2}{3}\Delta_2 \tau^3$.
Since $\Gamma \leq 0$, we have
$d_{z_n}(\phi_\ub(\tau_{n+1})) - d_{z_n}(\phi_\ub(\tau_n)) = |\xk_{n+1} - z_n|^2 - r^2 \leq -\Delta_1 \tau^2 - \frac{4}{3} \Delta_2 \tau^3$,
implying that $r - |\xk_{n+1} - z_n|$ has a uniform lower bound. }
    By the construction of $\set{z_n}$ in Lemma \ref{lem: z_n_geq_1}, particularly the guarantee of incremental and uniformly minimal distance for $|z_{n+1}-z_n|$ for each $n$, there exists a finite $N$ such that 
    $N=\inf\set{n\in\N: z_n\in \oball^d(y;r)}.$

\ymmarkr{By the property of the controller, specifically the attractivity to each $z_n$ under the modified condition in \eqref{E: cond_2} and \eqref{E: cond}, 
there exists a finite time $\tilde{T} \leq \tau_{N+1}$} such that $|\phi_\ub(\tilde{T}, x_0) - z_N| < r$. The conclusion follows by the triangle inequality. \pfbox

\section{Reachable Set of the Proxy System}\label{sec: proxy_extra}

We demonstrate that when $a \neq 0$, the property stated in Proposition \ref{prop: reachable_path_0} may not hold.

\begin{prop}\label{prop: reachable_path}
 Given 
 a $T$ such that $\hrr^{\leq T}( x_0)\subseteq\inte(\mathbb{B})$. Suppose $\operatorname{Im}(G(x_0))=\R^d$. For $a\neq 0$, $\partial\hrr_{\text{pse}}^{T}(x_0)\neq \partial\hrr^{T}(x_0)$.  
\end{prop}
\begin{proof}
 Due to the convexity of $\hrr^T(x_0)$, the supporting hyperplane theorem \cite{tyrrell1970convex} implies that for any non-zero $\nu\in\R^d$, there exists $x^*\in\partial\hrr^T(x_0)$  such that $\langle\nu, x^*\rangle\geq\langle \nu, x\rangle $ for all $x\in\hrr^T(x_0)$. Let $\hub^*$ be the control signal such that $\hphi_{\hub^*}(T)=x^*$, then, $\hub^*$ can be obtained by finding the maximizer of $\langle \nu, \hphi_{\hub}(T) \rangle$ subject to   \eqref{E: proxy}. Define the Hamiltonian $H(\hat{x}, \hat{u}, p)=\langle p, a + (b-c|\hat{x}|)\hat{u}\rangle$, where $p\in\R^d$ is the adjoint (co-state) vector. According to Pontryagin’s maximum principle (PMP), the optimal control $\hat{u}^*$ minimizes the Hamiltonian with respect to $\hat{u}$, leading to $|\hat{u}^*|=1$ if there exists $\hat{u}\in\hat{\uu}$ such that  $\langle p, \hat{u}\rangle\neq 0$; otherwise,   $\hat{u}^*$ is set to be anything in $\hat{\uu}$. 

Particularly, if $\operatorname{Im}G(x(0))=\R^d$, $\hat{u}^* = \frac{p}{|p|}$ if $p\neq 0$, and $\hat{u}^*$ can be arbitrary otherwise.  The optimal control law is $\hub^*(t)=\frac{p(t)}{|p(t)|}$, where the  adjoint equation is $\dot{p}(t)= \partial_{\hat{x}}H(\hphi_{\hub^*}(t), \hub^*(t), p(t)) = c\frac{|p(t)|}{|\hphi_{\hub^*}(t)|}\hphi_{\hub^*}(t)$ with $p(T) = \nu$, provided  $\hphi_{\hub^*}(t)\neq 0$). However, under the optimal control law, the set of $t$ for which $\hphi_{\hub^*}(t)\neq 0$ has measure $0$, and $p(t)$ can be determined by continuity despite the singularity.  Now, suppose $\hub^*\equiv \mathbf{e}$ is in a constant direction for some unit vector $\mathbf{e}$, then $p(t)$ is in a constant direction, which implies that    $\hphi_{\hub^*}(t)$ is in the same direction. However, this further implies that $\hphi_{\hub^*}(t) = \rho(s)w$ for a scalar function $\rho$ and a fixed vector $w\in\R^d$, which in turn requires that $a+(b-|\hphi_{\hub^*}(t)|)\mathbf{e}$  must always be consistent with $w$. This cannot hold for all $\mathbf{e}$, particularly when $a$ is not parallel to $\mathbf{e}$, as it would additionally require $|\hphi_{\hub^*}(t)|$ to remain constant (an impossible condition under $\mathbf{e}$). Then, the above argument concludes that there exist constant control inputs $\hub$ with $|\hub|\equiv 1$ that cannot steer the trajectory to the boundary. The statement is proved. 
\end{proof}

    The above statement suggests that the $\partial\hrr_{\text{pse}}^{T}(x_0)$ may lie strictly inside the true boundary. The condition of $\partial\hrr_{\text{pse}}^{T}(x_0)\neq \partial\hrr^{T}(x_0)$ for the case when $\operatorname{Im}(G(x_0))\neq\R^d$   can also be shown in a similar manner to the proof of Proposition \ref{prop: reachable_path}. However, we omit it here as it is less relevant to our current problem.  

    The following statement shows how spatial scaling of $\huu$ is related to the effect of temporal scaling.
\begin{cor}\label{cor: scaling}
    Suppose that $T$ and $k>0$ are such that $\hrr^{\leq kT}(x_0)\subseteq\inte(\B)$ and $\hrr^{\leq T}(x_0)\subseteq\inte(\B)$. Let $z$ be such that $z\in\partial\hrr^{kT}(x_0)$ 
    for some $k>0$. 
    Then, there exists a $\tilde\ub = k\hub$ such that $z \in \partial\tilde{\rr}^T(x_0)$, where $\tilde{\rr}^T(x_0)$ is the reachable set at $T$ of system \eqref{E: proxy} under $k\huu:=\set{\tilde{u}: \tilde{u}=k\hat{u}, \hat{u}\in\huu}$. \Qed
\end{cor}

\noindent\textbf{Proof of Corollary \ref{cor: scaling}}: 
By Proposition \ref{prop: reachable_path}, the controller $\hub$ is a constant vector. Note that, for each $t$, $\hphi_{\tilde{\ub}}(t)$ solves
$d\hphi_{\tilde{\ub}}(t)/dt =   (b-c|\hphi_{\tilde{\ub}}(t)|)k\hub,$
whereas $\hphi_{\hub}(kt)$ solves
$d\hphi_{\hub}(kt)/d(kt) =   (b-c|\hphi_{\hub}(kt, x)|)\hub$
in the slow time scale of $kt$. 
    Then,  $d\hphi_{\hub}(kt)/dt =   (b-c|\hphi_{\hub}(kt)|)k\hub$, which implies that $\hphi_{\tilde{\ub}}(t)=\hphi_{\hub}(kt)$ for all $t\geq 0$ and $\hub$. Letting $\hub$ be the control signal such that $\hphi_{\hub}(kt) = z\in \partial\hrr^T(x_0)$, the statement follows immediately.   \pfbox

\begin{rem}
    The uniform scaling from any ball-shaped set to a unit ball does not affect the analysis technique used to obtain the proxy system, whose reachable set is guaranteed to be a subset of the true reachable set. In view of Corollary \ref{cor: scaling}, when working with the proxy system, a uniform scaling of the input set only affects the guaranteed reachable time. \Qed
\end{rem}
    
\section{Reachability Analysis}\label{sec: reach_anal}

For $a$ that cannot satisfy \eqref{E: cond}, differences arise  in the control design at $\tau_n$. In this case, we create a reference path for $\phi_{\ub}(t) - a t$ by using controller $\frac{y-aT}{|y-aT|}$ in the proxy system to reach $y\in\hrr^T_{\text{pse}}(x_0)$. The reference path then becomes the line segment from $x_0$ to $y - aT$. We also use a piecewise constant control signal $\ub$ that ensures $\dot{d}_{z_n}(\phi_{\ub}(t, \xk_n) - a t) < 0$ for all $t \in [\tau_n, \tau_{n+1})$ and for each $n$.
In addition, we have the following inductive result as a slight modification of Lemma \ref{lem: z_n_geq_1}.

\begin{lem}\label{lem: modification}
    For each $n\geq 1$,  let  $\xk_n:=x_{n,0}-a\tau_n$ be given and consider $z_n=\theta_n(y-aT)$ with $|z_n-\xk_n|=r$ for some $\theta_n\geq 0$. Given that $\dot{d}_{z_n}(\phi_\ub(t))<a$ for all $t\in[\tau_n, \tau_{n+1})$. Then, there exists a  $z_{n+1}=\theta_{n+1}(y-aT)$ such that $\theta_{n+1}>\theta_n$ and $|z_{n+1}-\xk_{n+1}|=r$. \Qed 
\end{lem}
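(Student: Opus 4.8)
The plan is to reduce the statement to shifted coordinates, in which it becomes literally the same geometric fact already established in Lemma~\ref{lem: z_n_geq_1}. First I would introduce the shifted flow $\yb(t):=\phi_\ub(t)-at$ and observe that, with the redefinition $\xk_n:=x_{n,0}-a\tau_n$ used in this case, the grid values are exactly samples of this flow: since $x_{n,0}=\phi_\ub(\tau_n)$ we have $\xk_n=\phi_\ub(\tau_n)-a\tau_n=\yb(\tau_n)$, and because $x_{n+1,0}=x_{n,m+1}=\phi_\ub(\tau_{n+1})$ we likewise get $\xk_{n+1}=\phi_\ub(\tau_{n+1})-a\tau_{n+1}=\yb(\tau_{n+1})$. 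I would read the hypothesis as the statement that $t\mapsto d_{z_n}(\yb(t))=|\yb(t)-z_n|^2$ is strictly decreasing on $[\tau_n,\tau_{n+1}]$, i.e.\ $\dot d_{z_n}(\phi_\ub(t)-at)<0$, which is what the displayed inequality is meant to encode after correcting its shorthand.

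The key steps then mirror the template of Lemma~\ref{lem: z_n_geq_1} with $y$ replaced by $y-aT$. Step one: from strict monotonicity and $d_{z_n}(\yb(\tau_n))=|\xk_n-z_n|^2=r^2$, conclude $d_{z_n}(\yb(\tau_{n+1}))=|\xk_{n+1}-z_n|^2<r^2$, that is $|\xk_{n+1}-z_n|<r$. Step two: parametrize the reference ray by $\theta\mapsto\theta(y-aT)$ and set $g(\theta):=|\theta(y-aT)-\xk_{n+1}|^2$, a strictly convex quadratic with $g(\theta_n)=|z_n-\xk_{n+1}|^2<r^2$ and $g(\theta)\to\infty$ as $\theta\to\infty$. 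By the intermediate value theorem there is a root $\theta_{n+1}>\theta_n$ of $g(\theta)=r^2$; taking $z_{n+1}:=\theta_{n+1}(y-aT)$ gives simultaneously $\theta_{n+1}>\theta_n$ and $|z_{n+1}-\xk_{n+1}|=r$, which is the claim. If one wishes, the two-sided bound $r-|\xk_{n+1}-z_n|\le|z_{n+1}-z_n|<2r$ follows from the ordinary and reverse triangle inequalities exactly as in Lemma~\ref{lem: z_n_geq_1}.

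The only genuine obstacle is conceptual rather than computational: making precise that passing to $\yb(t)=\phi_\ub(t)-at$ converts the $a\neq0$ problem into the $a=0$ geometry, so that ``the flow moves toward $z_n$'' is measured by $d_{z_n}$ along $\yb$ and not along $\phi_\ub$ itself. Once this identification and the reading of the hypothesis are fixed, no new estimates are required. I would also flag the one point where a careless argument could fail, namely the selection of the correct intersection of the sphere with the ray: strict convexity of $g$ together with $g(\theta_n)<r^2$ guarantees a root strictly to the right of $\theta_n$ irrespective of where the vertex of $g$ lies relative to $\theta_n$, which is precisely what yields $\theta_{n+1}>\theta_n$ and hence the forward progress of $\{z_n\}$ toward $y-aT$.
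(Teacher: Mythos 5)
Your proof is correct and takes essentially the same route the paper intends: the paper offers no separate proof of this lemma, presenting it only as a ``slight modification'' of Lemma~\ref{lem: z_n_geq_1}, and your argument is exactly that modification made explicit --- pass to the shifted flow $\phi_\ub(t)-at$ so that $\xk_n,\xk_{n+1}$ become its samples, deduce $|\xk_{n+1}-z_n|<r$ from monotone decrease of $d_{z_n}$, and solve $|\theta(y-aT)-\xk_{n+1}|^2=r^2$ to obtain a root $\theta_{n+1}>\theta_n$, just as in the paper's proof of Lemma~\ref{lem: z_n_geq_1}. Your reading of the (ill-typed) hypothesis $\dot d_{z_n}(\phi_\ub(t))<a$ as shorthand for negativity of the derivative of $d_{z_n}$ along the shifted flow is also the intended interpretation.
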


We summarize the algorithm in Algorithm \ref{alg: one_time2}.

\setcounter{algorithm}{1}
\begin{algorithm}[H]\label{alg: alg2}
	\caption{Control Synthesis}\label{alg: one_time2} 
	\begin{algorithmic}[2]
      \Require $d$, $m$,  $x_{0, 0}:=x_0$, $T$, $y\in\partial \hrr^T_{\text{pse}} (x_0)$, and $u_{0, 0}= (1-\epsilon) \frac{G(x_0)^\dagger (y-aT-x_0) }{\|G(x_0)^\dagger\||y-aT-x_0|}$. 
		\Require  $\dt$, $\epsilon$, $k$  based on the conditions $\epsilon > (C_3 / C)\delta t, \quad \frac{2kb\tau}{b - c\rho} < T, \quad k > \frac{Cb}{b - c\rho}$, and $\ee_n(\dt, \epsilon)+\ee_\mu(\dt, \epsilon)\leq r(b-c|\xk_n|-\ee_r(\dt, \epsilon)+\epsilon M_0)$, where $r = \sup_{\{\hub\equiv \hat{u}\in\partial\huu\}}|\phi_\ub(k(m+1)\dt, x_0)-ak(m+1)\dt-x_0|$.

  \State $n=0$.
  \Repeat 
  \State $\tau_n = n(m+1)\dt$.
  \For{$j$ \textbf{from} 0 \textbf{to} $m$}
 \State  $\ub([\tau_n + j\dt, \tau_n+(j+1)\dt)\equiv u_{n,j}$ (see Eq.~\eqref{E: u_nj};
  \State $x_{n,j+1}=\phi_\ub(\dt, x_{n,j})$.
  \EndFor
 \State  $\xk_{n+1}=x_{n,m+1}-a(\tau_{n}+(m+1)\dt)$.
 \State Determine $z_{n+1}$ based on Lemma \ref{lem: modification}. 
 \State Let 
      $u_{n+1, 0}=(1-\epsilon)\operatorname{argmin}_{u\in\uu_\lambda}\langle 2(\xk_{n+1}-z_{n+1}), \sum_{j\in\I}\lambda_j(x_{n,j+1}-x_{n,j}) \rangle,$ 
 where $\sum_{j\in\I}\lambda_j = 1$ and $\uu_\lambda$ are defined in Definition \ref{def: lambda}. 
 \State $n:=n+1$.
 \Until $n(m+1)\dt\geq T$. 
	\end{algorithmic}
\end{algorithm}

\begin{prop}
    Following Algorithm \ref{alg: one_time2}, $\phi_\ub$ will  reach $\oball^d(y; \gamma(k, \dt,\epsilon))$, where $\gamma(k, \dt,\epsilon):=N\nu(\dt)+ (rcN\nu(\dt) 
    +rC_r(\dt, \epsilon) + C_n(\dt, \epsilon) +\ee_\mu(\dt,\epsilon))\tau$, $N$ is the number of interation where the algorithm stops, $\nu(\dt)=C_1\dt-\rbar$, and $\rbar$ is the minimum distance for \eqref{E: proxy} to travel under a constant signal $\hub$ with $|\hub|\equiv 1$ within the time horizon $\tau$. \Qed
\end{prop}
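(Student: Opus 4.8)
The plan is to run the argument of Proposition \ref{prop: accuracy} in the shifted coordinate $\tilde\phi(t):=\phi_\ub(t)-at$ and then transfer the estimate back through $\phi_\ub(t)=\tilde\phi(t)+at$. First I would pin down the number of cycles: since the stopping test of Algorithm \ref{alg: one_time2} is $n(m+1)\dt\geq T$, the loop halts after exactly $N=\lceil T/\tau\rceil$ learn-control cycles, so $\tau_N\in[T,T+\tau)$ and in particular $|\tau_N-T|<\tau=(m+1)\dt$. This is the quantitative form of the remark preceding the statement: the reachable time cannot drift from $T$ by more than one cycle, which is precisely what lets the final un-shift by $a\tau_N$ land near $y$ rather than near $y-aT$.

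Working in the shifted frame, I would invoke Lemma \ref{lem: modification} together with the drift-corrected version of Corollary \ref{cor: decision} (this is where condition \eqref{E: cond}, now carrying the $-2|a|$ term, enters, and where $u_{n,0}$ is taken as in Algorithm \ref{alg: one_time2}). These guarantee $\dot d_{z_n}(\tilde\phi(t))<0$ on every $[\tau_n,\tau_{n+1}]$, so that $z_n=\theta_n(y-aT)$ is well defined with $\theta_n$ strictly increasing and $|z_n-\xk_n|=r$ throughout. The core of the proof is a per-cycle progress estimate comparing the advancement $|z_{n+1}-z_n|$ of the moving target along the segment with the displacement the shifted proxy \eqref{E: proxy} is guaranteed to make under a constant unit input over one horizon $\tau$, namely at least $\rbar$. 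Decomposing the one-cycle displacement of $\tilde\phi$ as this guaranteed proxy displacement, reduced by the discretization and nonlinearity defect quantified in \eqref{E: ineq_thm} and by the learning defect $\mu(\dt,\epsilon)$ of Corollary \ref{cor: decision}, the per-cycle shortfall of $\tilde\phi$ relative to the proxy is controlled by $\nu(\dt)=M_0(m+1)^2\dt-\rbar$ up to these higher-order corrections.

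I would then accumulate over the $N$ cycles. A telescoping sum of the advancements produces the leading term $N\nu(\dt)$. The subtlety is that the proxy speed $b-c|\hxb|$ decays as $|\hxb|$ grows, so the guaranteed progress in a cycle is itself a function of the distance already covered; linearizing this slowdown through the factor $c$ and the distance scale $r$ yields the coupled correction $rcN\nu(\dt)\dt$, while integrating the within-cycle velocity bound of Theorem \ref{thm: main_direction} and the suboptimality $\mu(\dt,\epsilon)$ over a cycle yields $M_0(m+1)^2b\dt^2$, $2(M_0(m+1)^2\dt^2+r\lmax)M_0(m+1)^2\dt^2$, and $\mu^2(\dt,\epsilon)$, each obtained from the corresponding per-step bound and hence carrying an extra factor of $\dt$ (and since $\mu\sim\dt$, the term $\mu^2$ is of the same $\dt^2$ order). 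Summing gives $|\xk_N-(y-aT)|\leq\gamma(k,\dt,\epsilon)$.

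Finally, using $\phi_\ub(\tau_N)=\xk_N+a\tau_N$ and $y=(y-aT)+aT$, the estimate on $|\xk_N-(y-aT)|$ together with $|\tau_N-T|<\tau$ controls $|\phi_\ub(\tau_N)-y|$; the residual drift $|a|\,|\tau_N-T|$ is of order $\dt$ and is absorbed into $\gamma$, so $\phi_\ub$ reaches $\oball^d(y;\gamma(k,\dt,\epsilon))$. I expect the main obstacle to be exactly the third and fourth steps: because $N\sim T/\tau$ grows like $1/\dt$ as $\dt\to0$, the per-cycle defects must be shown to sum without blow-up, and the nonlinear coupling introduced by the curvature term $c|\hxb|$—which makes the admissible one-step progress depend on the entire past trajectory—must be linearized carefully so as to produce the cross term $rcN\nu(\dt)\dt$ rather than a quantity that compounds geometrically.
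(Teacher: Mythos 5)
Your skeleton is the same as the paper's: work in the shifted coordinate, compare the controlled trajectory with the proxy reference $\hphi_{\hub}$ driven by the constant input $\hub\equiv (y-aT)/|y-aT|$, telescope over cycles to get the leading term $N\nu(\dt)$, and attribute the remaining summands of $\gamma$ to the derivative-comparison and learning errors. However, there is a genuine gap at the accumulation step, and it is exactly the point you defer ("the per-cycle defects must be shown to sum without blow-up"). If, as in your third step, each cycle's shortfall is $\nu(\dt)$ \emph{plus} the higher-order corrections, then summing over $N\sim T/((m+1)\dt)$ cycles yields $N\nu(\dt)+N\cdot\mathcal{O}(\dt^2)=N\nu(\dt)+\mathcal{O}(\dt)$, which is strictly weaker than the claimed bound: in $\gamma$ the corrections $rcN\nu(\dt)\dt+M_0(m+1)^2b\dt^2+2(M_0(m+1)^2\dt^2+r\lmax)M_0(m+1)^2\dt^2+\mu^2(\dt,\epsilon)$ occur exactly once, not $N$ times. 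The paper's proof avoids this by using two estimates of different resolution: (a) a coarse inductive bound on the trajectory deviation, $|\hphi_{\hub}(t)-\phi_\ub(t)|\leq n\nu(\dt)$ for $t\in[\tau_n,\tau_{n+1}]$, hence the waypoint deviation $|\tilde z_n-z_n|\leq n\nu(\dt)$ --- this coarse quantity is the \emph{only} one allowed to accumulate over cycles; and (b) the refined derivative comparison (the analogue of \eqref{E: ineq_thm} together with $\mu$), integrated over a \emph{single} cycle to give $\rho(k,\dt,\epsilon,N)$, invoked only once, at the terminal time, to compare the residuals $|\phi_\ub(T)-aT-z_N|$ and $|\hphi_{\hub}(T)-aT-\tilde z_N|$. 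The telescoping identity $\sum_{n=1}^N|z_n-z_{n-1}|\hub=\tilde z_N+N\nu(\dt)\hub$, valid because all waypoints lie on the same ray through $y-aT$, then combines (a) and (b) into $|\phi_\ub(T)-y|\leq\rho(k,\dt,\epsilon,N)+N\nu(\dt)=\gamma$. Without this separation your scheme produces a bound of a different (worse) order in $\dt$ and does not prove the stated $\gamma$.

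A smaller discrepancy: you stop at $\tau_N$ with $N=\lceil T/\tau\rceil$ and must carry the un-shift error $|a||\tau_N-T|\leq|a|(m+1)\dt$, which you propose to absorb into $\gamma$; but $\gamma$ is a fixed formula with no such summand. The paper sidesteps this by taking $N$ with $\tau_N\leq T\leq\tau_{N+1}$ and evaluating the trajectory at the exact time $T$, so the shift $at$ cancels identically between $\phi_\ub(T)-aT$ and $y-aT$ and no extra drift term ever appears.
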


\begin{proof}
For each $n$, let $\ub(t)=u_{n,j}$ for $t\in[\tau_n+j\dt, \tau_n+(j+1)\dt]$ and for $j\in\I$. 
    One can show in the same way  as in Corollary \ref{cor: decision} that $\dot{d}_{z_n}(\phi_\ub(t))<a$  for all $t\in[\tau_n, \tau_{n+1})$ and for each $n$. 

    Now we introduce $\set{\tilde{z}_n}$ with $\tilde{z}_n = \theta_n (y-aT)$ for $\theta_n\geq 0$ for the flow $\hphi_{\hub}(t)-at$ as in Lemma \ref{lem: modification}, where $\hub\equiv\frac{y-aT}{|y-aT|}$. Let $\hat{r}_n: = |\hphi_{\hub}(\tau_n)-\hphi_{\hub}(\tau_{n-1} )|$ for each $n$. We compare   the distances related to $\hphi_{\hub}(t)-at$ and $\phi_\ub(t)-at$. 
    
    Note that $|\hphi_{\hub}(t)-\phi_\ub(t)|=\mathcal{O}(\dt)$ for $t\in[0, \tau]$ and $|\tilde{z}_1-z_1|=\mathcal{O}(\dt)$. We can show inductively that, for $t\in[\tau_n, \tau_{n+1}]$, $|\hphi_{\hub}(t)-\phi_\ub(t)|\leq  n(C_1\dt-\hat{r}_n)+ n\mathcal{O}(\dt)=:n\nu(\dt)$, which implies $|\tilde{z}_n-z_n|\leq n\nu(\dt)$. 

By a direct comparison with \eqref{E: ineq_thm}, we have
\begin{equation}
\begin{split}
    &|\dot{d}_{\tilde{z}_n}(\hphi_{\hub}(t)-at)-\dot{d}_{z_n}(\phi_{\ub}(t)-at)|\\
    \leq & rc|\hphi_{\hub}(t)-\phi_\ub(t)|
    +r\cdot C_r(\dt, \epsilon)+ C_n(\dt, \epsilon) + \ee_\mu(\dt, \epsilon).
\end{split}
\end{equation}

Let $\rho(k,\dt,\epsilon,n):=rcn\nu(\dt)\tau+
    r \tau C_r(\dt, \epsilon)+ \tau C_n(\dt, \epsilon)+\tau\ee_\mu(\dt,\epsilon)$.
It follows that 
\begin{equation}
\begin{split}
    &||\hphi_{\hub}(t)-at-\tilde{z}_n|-|\phi_\ub(t)-at-z_n||\\
    \leq &\int_0^\tau|\dot{d}_{\tilde{z}_n}(\hphi_{\hub}(t))-\dot{d}_{z_n}(\phi_{\ub}(t))|dt 
    \leq   \rho(k,\dt,\epsilon,n).
\end{split}
\end{equation}

Let $N$ be such that $\tau_N\leq T\leq \tau_{N+1}$. Then, $\phi_\ub(T)=\phi_\ub(T)-z_N + z_N-z_{N-1}+z_{N-1}-z_{N-1} + \cdots   $. Due to the monotone direction of $\set{z_n}$, we have that $\phi_\ub(T)  
        =\phi_\ub(T)-z_N+\sum_{n=1}^{N}|z_{n}-z_{n-1}|\hub 
        =\phi_\ub(T)-z_N+\sum_{n=1}^{N}|\tilde{z}_{n}-\tilde{z}_{n-1}|\hub  + N(\nu(\dt))\hub 
        =  \phi_\ub(T)-z_N +\tilde{z}_N+ N\nu(\dt)\hub 
        =  \phi_\ub(T)-z_N -(\hphi_{\hub}(T) - \tilde{z}_N)+ y+  N\nu(\dt)\hub$. 
Therefore, 
\begin{equation}
    \begin{split}
       & |\phi_\ub(T)-y|\\
        \leq &||\hphi_{\hub}(t)-at-\tilde{z}_n|-|\phi_\ub(t)-at-z_n|| + N\nu(\dt)\\
        \leq & \gamma(k,\dt,\epsilon)
    \end{split}
\end{equation}
which completes the proof.
\end{proof}

\begin{rem}
The term $\gamma$ is dominated by $N\nu(\dt)$, where $N$  is proportional to $T$. The accuracy of reachability control maintains its precision only within a sufficiently small time horizon.  This limitation arises not from any deficiency in the algorithm itself, but because our aim is to leverage the knowledge of the GRS based on \eqref{E: proxy}. Unfortunately, the precision suffers due to the inherent inaccuracies in GRS estimation for large values of $a$. \Qed
\end{rem}

\end{document}